\newtheorem{proposition}{Proposition}[section]
\newtheorem{theorem}[proposition]{Theorem}
\newtheorem{lemma}[proposition]{Lemma}
\newtheorem{remark}[proposition]{Remark}
\newtheorem{example}[proposition]{Example}
\newtheorem{algorithm}[proposition]{Algorithm}
\newtheorem{assumption}[proposition]{Assumption}
\newenvironment{proof}{{\noindent \bf Proof:}}{\hfill $\fbox{}$ \vspace*{5mm}}
\numberwithin{equation}{section}
\newcommand{\be}{{\bf e}}
\newcommand{\la}{\lambda}
\newcommand{\La}{\Lambda}
\newcommand{\ca}{\mathcal{A}}
\newcommand{\ci}{\mathcal{I}}
\newcommand{\cm}{\mathcal{M}}
\newcommand{\co}{\mathcal{O}}
\newcommand{\cp}{\mathcal{P}}
\newcommand{\cv}{\mathcal{V}}
\newcommand{\cw}{\mathcal{W}}
\newcommand{\cz}{\mathcal{Z}}
\newcommand{\ri}{{\rm i}}
\newcommand{\grad}{{\rm grad\,}}
\newcommand{\qf}{{\rm qf}}
\newcommand{\tr}{{\rm tr}}
\newcommand{\diag}{{\rm diag}}
\newcommand{\bd}{\boldsymbol}
\newcommand{\R}{{\mathbb R}}
\newcommand{\Rnn}{{\mathbb R}^{n\times n}}
\newcommand{\Rmn}{{\mathbb R}^{m\times n}}
\newcommand{\BE}{\begin{equation}}
\newcommand{\EE}{\end{equation}}
\begin{document}

\title{\bf Riemannian  Newton-CG Methods for Constructing  a Positive Doubly Stochastic Matrix From Spectral Data}
\author{Yang Wang\thanks{School of Mathematical Sciences, Xiamen University, Xiamen 361005, People's Republic of China  (1139209054@qq.com).}
\and Zhi Zhao\thanks{Department of Mathematics, School of Sciences, Hangzhou Dianzi University, Hangzhou 310018, People's Republic of China (zhaozhi231@163.com). The research of this author is supported by the National Natural Science Foundation of China (No. 11601112).}
\and Zheng-Jian Bai\thanks{Corresponding author. School of Mathematical Sciences and Fujian Provincial Key Laboratory on Mathematical Modeling \& High Performance Scientific Computing,  Xiamen University, Xiamen 361005, People's Republic of China (zjbai@xmu.edu.cn). The research of this author is partially supported by the National Natural Science Foundation of China (No. 11671337) and the Fundamental Research Funds for the Central Universities (No. 20720180008).} }
\date{}
\maketitle

\begin{abstract}
In this paper, we consider the inverse eigenvalue problem for the positive doubly stochastic matrices, which aims to construct  a positive doubly stochastic matrix from the prescribed realizable spectral data. By using the real Schur decomposition, the inverse  problem is written as a nonlinear matrix equation on a matrix product manifold. We propose monotone and nonmonotone Riemannian inexact Newton-CG methods for solving the nonlinear matrix equation. The global and quadratic convergence of the proposed methods is established  under some assumptions.  We also provide invariant subspaces of the constructed solution to  the inverse  problem based on the computed real Schur decomposition. Finally, we report some numerical tests, including an application in digraph, to  illustrate the effectiveness of the proposed methods.
\end{abstract}

\vspace{3mm}
{\bf Keywords.} Inverse eigenvalue problem, positive doubly stochastic matrix, Riemannian manifold, Newton's method

\vspace{3mm}
{\bf AMS subject classifications.}  65F18,  65F15, 15A18, 58C15

\section{Introduction}
The inverse eigenvalue problem (IEP) arises in many applications including structural dynamics \cite{FM95}, vibration \cite{CEH12,G04},  control design \cite{D03}, inverse Sturm--Liouville problem \cite{CC97}, and  graphs \cite{BB20,BN14}, etc. One may refer to \cite{C98,CG02,CG05,X98} and references therein for the theoretical results, computational approaches, and applications of a general IEP.

In this paper, we consider the following IEP for positive doubly stochastic matrices.

{\bf PDStIEP.} {\em Given a realizable list of $n$ complex numbers $\{\lambda^*_1, \lambda^*_2, \ldots, \lambda^*_n\}$,
find an $n$-by-$n$ positive doubly stochastic matrix $C$ such that its eigenvalues are
$\lambda^*_1, \lambda^*_2, \ldots , \lambda^*_n$.}

Doubly stochastic matrices are crucial for many applications including  communication theory of  satellite-switched, time-division, multiple-access systems \cite{B88}, quantum mechanics \cite{L97}, graph theory \cite{B88,M88} (e.g., critical arcs for strongly connected digraphs \cite{HS72}), graph-based clustering \cite{VH16,WH16,YO12,ZS06}, and  the assignment problem \cite{M12}, etc.

The IEP for doubly stochastic matrices aims to find a  doubly stochastic matrix from the prescribed spectrum. The existence theory is an interesting question and some necessary or sufficient conditions were provided in some literature (e.g., \cite{HP04,LX14,M84,M03,PM65}). Some constructive  methods were proposed for solving
the IEP for doubly stochastic matrices \cite{MA13,PM65,RR03}. Recently, there have been some Riemannian optimization methods for solving the IEP for different structure matrices such as  Riemannian nonlinear conjugate gradient methods for solving the IEP for doubly stochastic matrices \cite{YB16}, the IEP for  stochastic matrices \cite{ZJB16},  and a Riemannian inexact Newton-CG method for solving the IEP for nonnegative matrices \cite{ZBJ18}.

In this paper, we propose both monotone and nonmonotone   Riemannian inexact Newton-CG methods for solving the PDStIEP. This is motivated by the recent two papers due to Zhao, Bai, and Jin \cite{ZBJ18} and Li and Fukushima \cite{LF99}. In \cite{ZBJ18}, a Riemannian inexact Newton-CG method was provided for solving the IEP for nonnegative matrices, where the global and quadratic convergence was established under some assumptions. In \cite{LF99},  Li and Fukushima presented a Gauss-Newton-based BFGS method for solving symmetric
nonlinear equations. By exploring the real Schur decomposition and the geometric properties of the set of  positive doubly stochastic matrices, the PDStIEP is written as a nonlinear matrix equation on a matrix product manifold. Then we give both monotone and nonmonotone   Riemannian inexact Newton-CG methods for solving the nonlinear matrix equation with a constraint of a matrix product manifold. The global and quadratic convergence of our methods is  derived under some assumptions. We also compute invariant subspaces of the constructed solution to the PDStIEP via its real Schur decomposition. Finally, we present  some numerical tests, including an application in the digraph, to illustrate the efficiency of the proposed methods.

Throughout this paper, we use the following notation. Let $\Rmn$ be the set of all $m\times n$ real matrices,
which is equipped with  the Frobenius inner product
\[
\langle A,B\rangle_F=\tr(A^TB)\quad \forall A,B\in\Rmn
\]
and its induced Frobenius norm $\|\cdot\|_F$, where ``\tr" denotes the trace of a square matrix. We use $I_n$ to denote the identity matrix of order $n$. Let  $\mathbf{0}_n$ and $\mathbf{0}_{n\times n}$  be the $n$-dimensional zero vector and the zero matrix of order $n$, respectively.  For any two matrices $A,B\in\Rmn$, $A\oslash B$ and $A\odot B$ stand for the Hadamard division and Hadamard product of $A$ and $B$, respectively. For any  matrix $A\in\Rmn$,  we let $(A)_{ij}$ or $a_{ij}$ stand for the $(i,j)$-entry of $A$, where $1\le i\le m$ and $1\le j\le n$. In addition, $[A,B]:=AB-BA$ mean Lie Bracket of  two square matrices $A$ and $B$.
We use $\exp(A)$ to denote the entry-wise exponential of a matrix $A$. Let $|\cdot|$ be the absolute value of a real or complex number. Let $\Rnn_{++}$ denote the set of all real $n$-by-$n$ element-wise positive matrices.

The rest of this paper is organized as follows. In Section \ref{sec:2} we reformulate the PDStIEP as a nonlinear matrix equation on a matrix product manifold and propose both monotone and nonmonotone   Riemannian inexact Newton-CG methods for solving the nonlinear matrix equation. In Section \ref{sec:3} we derive the global and quadratic convergence of our methods under some assumptions. In Section \ref{sec:4} we
further explore how to compute invariant subspaces of the solution to the PDStIEP once its real Schur decomposition is available. Finally, some numerical tests and concluding remarks are reported in  Sections \ref{sec:5} and \ref{sec:6}, respectively.

\section{Riemannian inexact Newton-CG methods}\label{sec:2}
In this section, by exploring the real Schur decomposition, we reformulate the PDStIEP as a nonlinear matrix equation on a matrix product manifold. Based on  the geometric properties of the set of positive doubly stochastic matrices, we present both monotone and nonmonotone  Riemannian inexact Newton-CG methods for solving the nonlinear matrix equation.
\subsection{Reformulation}
An $n$-by-$n$ real matrix $C$ is called a positive doubly stochastic matrix if it is an element-wise positive matrix with each row and column summing to $1$. The set of all $n$-by-$n$ positive stochastic matrices is defined by
\BE\label{dpn}
\mathds{DP}_n:=\{ A\in \Rnn \ | \  A \in \Rnn_{++},
\; A\mathbf{e} = \mathbf{e}, \; \mbox{and} \; A^T\be = \be \},
\EE
where  $\be$ is an $n$-vector of all ones.

We point out that the {\em realizable list} of $n$ complex numbers $\{\lambda^*_1, \lambda^*_2, \ldots, \lambda^*_n\}$ means that there exists at least an $n$-by-$n$ positive doubly stochastic matrix with  $\{\lambda^*_1, \lambda^*_2, \ldots, \lambda^*_n\}$  as its spectrum \cite{JM17}. Since the set $\{\lambda^*_1, \lambda^*_2, \ldots, \lambda^*_n\}$ is closed under complex conjugation, without loss of generality, we may assume that
\[
\la_{2j-1}^*=a_j+ b_j\ri,\quad \la_{2j}^*=a_j- b_j\ri, \quad j=1,\ldots,s; \quad \la_j^*\in\R, \quad j=2s+1,\ldots,n,
\]
where $a_j, b_j\in\R$ with $b_j\neq 0$ for $j=1,\ldots,s$ and $\ri=\sqrt{-1}$. Then we define a block diagonal matrix by
\[
\La:={\rm blkdiag}\left(\la_1^{[2]*}, \ldots,\la_s^{[2]*},\la_{2s+1}^*,\ldots,\la_n^*\right)
\]
with diagonal blocks $\la_1^{[2]*}, \ldots,\la_s^{[2]*},\la_{2s+1}^*,\ldots,\la_n^*$,
where
\[
\lambda_j^{[2]*} =
\left[
\begin{array}{cc}
a_j & 0\\
0 & a_j
\end{array}
\right],\quad j=1,\ldots,s.
\]

By using  \cite[Theorem 2.3.4]{RHCJ12}, we know that, for a real $n\times n$ matrix $A$, there exists an $n\times n$ orthogonal matrix $Q$ such that
\[
A=QTQ^T,
\]
where $T$ is a real  $n\times n$ upper quasitriangular matrix (i.e., the real Schur form) with $2\times 2$ and $1\times 1$ blocks on the diagonal. The $1\times 1$ blocks are the real eigenvalues of $A$ and the eigenvalues of these $2\times 2$ blocks are the complex conjugate eigenvalues  of $A$. As noted in  \cite{BD93} and \cite{Brandts02}, for $1\le j\le s$, the $j$-th diagonal $2\times 2$ block of $T$ can be standardized in the form of
\[
\left[
\begin{array}{cc}
a & b\\
c & a
\end{array}
\right],\quad bc< 0
\]
via a Givens rotation based orthogonal similarity transformation. Sparked by this, define the sets $\co(n)$,  $\cv$, and $\cw$  by
\BE\label{ovw}
\left\{
\begin{array}{lcl}
\co(n) & : =&  \big\{ Q \in \Rnn \ | \ Q^TQ = I_{n} \big\},\\[2mm]
\cv &: = & \big\{ V \in \Rnn  \ | \  v_{ij}= 0, \; (i,j) \in \mathcal{I}_1 \cup \mathcal{I}_2 \big\},\\[2mm]
\cw &: =&  \big\{ W \in \Rnn  \ | \  w_{ij}= 0  \; \mbox{for} \;  (i,j) \not\in \mathcal{I}_2
\; \mbox{and} \; w_{ij} > 0\; \mbox{for} \; (i,j) \in \mathcal{I}_2\big\},
\end{array}
\right.
\EE
where $\mathcal{I}_1$ and  $\mathcal{I}_2$ are two index subsets defined by
\[
\mathcal{I}_1: = \big\{ (i,j) \ | \ i\geq j,  \; i,j =1,\ldots,n \big\} \quad \mbox{and}  \quad
\mathcal{I}_2:=\big\{ (2k-1,2k) \ |\ k =1,\ldots,s \big\}.
\]
Also, define a linear operator $\ca:\cw \to \Rnn$ by
\[
\big(\ca(W)\big)_{ij}:=
\left\{
\begin{array}{ll}
\displaystyle \frac{-b_{i/2}^2}{w_{ji}}, & (j,i)\in \mathcal{I}_2, \\[4mm]
  0, &  \mbox{otherwise},
\end{array}
\right.
\]
for all $W\in \cw$. Then it is easy to check that the following matrix set
\[
\cm: =  \big\{ Q(\Lambda +\ca(W) + W + V) Q^T \ | \  Q\in \co(n),\; W\in \cw,\; V\in \cv \big\}
\]
consists of all real $n\times n$ matrices with the prescribed eigenvalues $\lambda^*_1, \lambda^*_2, \ldots , \lambda^*_n$. Hence,  the PDStIEP has a solution if and only if $\cm\cap  \mathds{DP}_n \neq \emptyset$.

In what follows, we assume that the PDStIEP has at least one solution. The PDStIEP is equivalent to solving the following nonlinear matrix equation
\BE\label{DStIEPEQUATION}
F(C,Q,W,V)= \mathbf{0}_{n\times n}
\EE
for $(C,Q,W,V)\in \mathds{DP}_n \times \co(n) \times \cw \times \cv$, where the mapping $F: \mathds{DP}_n \times \co(n) \times \cw  \times \cv \to\Rnn$ is defined by
\BE\label{DStIEP:G}
F(C,Q,W,V)= C - Q(\Lambda + \ca(W) +  W + V)Q^T,
\EE
for all $(C,Q,W,V)\in \mathds{DP}_n \times \co(n) \times \cw \times \cv$.
\subsection{Geometric properties of $\mathds{DP}_n \times \co(n) \times \cw \times \cv$}
We first discuss the set $\mathds{DP}_n$ defined by \eqref{dpn}. As noted in \cite{DH18}, $\mathds{DP}_n$ is a multinomial manifold. Obviously, the doubly stochastic multinomial manifold $\mathds{DP}_n$ is a submanifold of $\Rnn_{++}$. The tangent space of $\mathds{DP}_n$  at $A\in  \mathds{DP}_n$ is given by
\[
T_A \mathds{DP}_n = \{ \xi_A \in \Rnn \ | \  \xi_A \be = \mathbf{0}_n,\;
\xi_A^T \be = \mathbf{0}_n \}.
\]
Let $\mathds{DP}_n$ be endowed with the  Fisher information metric  \cite{LL05}:
\BE\label{Fishermetric}
\langle \xi_A, \eta_A \rangle: = {\rm tr}\big( (\xi_A \oslash A)\eta_A^T \big)
= \sum_{i=1}^n \sum_{j=1}^n \frac{(\xi_{A})_{ij}(\eta_{A})_{ij}}{a_{ij}},
\quad \forall A\in\mathds{DP}_n, \;\xi_A,\eta_A\in T_A \mathds{DP}_n.
\EE
Then $\mathds{DP}_n$ is a Riemannian submanifold of $\Rnn_{++}$,
whose dimension is $(n-1)^2$ \cite{DH18}.
Let $A\in  \mathds{DP}_n$. With respect to the Riemannian metric (\ref{Fishermetric}),
the orthogonal projection $\Pi_A: \Rnn \to T_A \mathds{DP}_n$
is given by \cite[Theorem 2]{DH18}:
\[
\Pi_A B=  B- \big( \alpha \be^T + \be \beta^T\big) \odot A, \quad \forall B\in \Rnn.
\]
where the vectors $\alpha$ and $\beta$ are determined by  the following linear system:
\[
\left(
   \begin{array}{cc}
     I_n & A \\
     A^T & I_n \\
   \end{array}
 \right)
 \left(
 \begin{array}{c}
   \alpha \\
   \beta
 \end{array}
  \right)=\left(
 \begin{array}{c}
   B\be \\
   B^T\be
 \end{array}
\right).
\]
Let $\mathcal{P} :\Rnn_{++} \to \mathds{DP}_n$ denote the mapping from the set of element-wise positive matrices to the set of
doubly stochastic matrices  via the Sinkhorn-Knopp algorithm \cite{Sinkborn64}.
A retraction $R$ on $\mathds{DP}_n$ is a mapping from the tangent bundle $T\mathds{DP}_n=\cup_{A\in\mathds{DP}_n}T_A \mathds{DP}_n$ onto $\mathds{DP}_n$, which  can be chosen as \cite[Lemma 5]{DH18}:
\BE
 R_A(\xi_A): =  \mathcal{P}\big(A \odot  \exp(\xi_A \oslash A)\big),
\quad \forall  \xi_A\in T_A \mathds{DP}_n.
\EE

Next, we study the set $\co(n)$ defined by \eqref{ovw}.  It is easy to see that  $\co(n)$ is the set of all $n$-by-$n$ orthogonal matrices, which is an orthogonal group. The tangent space of $\co(n)$  at $Q\in\co(n)$ is given by \cite[p. 42]{AMS08}
\[
T_Q \co(n) = \{ QK \ | \  K^T=-K, \; K\in\Rnn \}.
\]
The Riemannian metric on  $\co(n)$ is inherited from the standard inner product of $\Rnn$, i.e.,
\[
\langle \xi_Q, \eta_Q \rangle := {\rm \tr}(\xi_Q^T\eta_Q), \quad \forall Q\in\co(n),\;\xi_Q,\eta_Q\in T_Q\co(n).
\]
Then $\co(n)$ is an embedded Riemannian submanifold of $\Rnn$,
whose dimension is $n(n-1)/2$.  A retraction $R$ on $\co(n)$  can be chosen as
\[
R_Q(\xi_Q)=\qf(Q+\xi_Q),\quad \forall Q\in\co(n), \;\xi_Q\in T_Q \co(n).
\]
Here, $\qf(\cdot)$ is the $Q$ factor of the QR decomposition of a nonsingular square matrix, where the $R$ factor has positive diagonal entries.
For more choices of retractions on $\co(n)$, one may refer to \cite[p. 58]{AMS08}.

We now discuss  the set $\cv$  defined by \eqref{ovw}.  It is obvious that $\cv$ is a subspace of $\Rnn$ and the tangent space of $\cv$ at a point  $V\in\cv$ is given by
\[
T_V\cv = \cv.
\]
Let $\cv$ be equipped with the Frobenius inner product $\langle\cdot,\cdot\rangle_F$ on $\Rnn$. Then $\cv$ is a Riemannian submanifold of $\Rnn$, whose dimension is $n(n-1)/2 - s$. For any $V\in\cv$, the orthogonal projection $\Pi_V: \Rnn \to T_W \cv$
is given by
\[
\Pi_V Z=  S\odot Z, \quad \forall Z\in \Rnn,
\]
where the matrix $S\in\Rnn$ is defined by
\[
S_{ij} :=
\left\{
\begin{array}{ll}
0, &  (i,j) \in \mathcal{I}_1 \cup \mathcal{I}_2 ,\\[2mm]
1, & \mbox{otherwise}.
\end{array}
\right.
\]
The retraction $R$ on $\cv$ is given by
\[
R_V (\xi_V) = V + \xi_V, \quad \forall   V\in \cv,\; \xi_V\in T_V\cv.
\]

In the following, we focus on the set $\cw$ defined by \eqref{ovw}.
We observe that $\cw$ is a submanifold of $\Rnn$, whose dimension is $s$.
The tangent space of $\cw$ at $W\in\cw$ is given by
\[
T_W\cw  = \big\{ W \in \Rnn  \ | \  W_{ij}= 0  \; \mbox{for} \;  (i,j) \not\in \mathcal{I}_2\big\}.
\]
Let $\cw$ be endowed with the following Fisher information metric
\BE\label{Fishermetric22}
\langle \xi_W, \eta_W \rangle: = \sum_{(i,j)\in \mathcal{I}_2} \frac{(\xi_{W})_{ij}(\eta_{W})_{ij}}{W_{ij}},
\quad \forall W\in\cw,\;\xi_W,\eta_W\in T_W \cw.
\EE
For any  $W\in\cw$, with respect to the Riemannian metric (\ref{Fishermetric22}),
the orthogonal projection $\Pi_W: \Rnn \to T_W \cw$
is given by
\[
\Pi_W Z=  M\odot Z, \quad \forall Z\in \Rnn,
\]
where the matrix $M\in \Rnn$ is given by
\[
M_{ij} :=
\left\{
\begin{array}{ll}
1, &  (i,j) \in \mathcal{I}_2 ,\\[2mm]
0, & \mbox{otherwise}.
\end{array}
\right.
\]
A retraction $R$ on $\cw$ is given by
\[
\big(R_W(\xi_W)\big)_{ij} :=
\left\{
\begin{array}{ll}
W_{ij}\exp\big((\xi_W)_{ij}/W_{ij} \big), &  (i,j) \in \mathcal{I}_2 ,\\[2mm]
0, & \mbox{otherwise},
\end{array}
\right.
\]
for all $W\in \cw$ and $\xi_W\in T_W\cw$.

Based on the above analysis, we give the basic geometric properties of the matrix product manifold $\mathds{DP}_n \times \co(n) \times \cw \times \cv$.
Let
\[
\cz:=\mathds{DP}_n \times \co(n) \times \cw \times \cv.
\]
Then the tangent space of $\cz$ at a point $Z:=(C,Q,W,V)\in\cz$ is given by
\[
T_Z\cz  = T_C\mathds{DP}_n\times T_Q\co(n) \times T_W\cw\times T_V\cv
\]
and the product manifold  $\cz$ is endowed with the Riemannian metric
\BE\label{rm:z}
\langle \xi_Z, \eta_Z \rangle: = \langle \xi_C, \eta_C \rangle + \langle \xi_Q, \eta_Q \rangle + \langle \xi_W, \eta_W \rangle +\langle \xi_V, \eta_V \rangle_F,
\EE
for all $Z=(C,Q,W,V)\in\cz,\;\xi_Z=(\xi_C,\xi_Q,\xi_W,\xi_V),\eta_Z=(\eta_C,\eta_Q,\eta_W,\eta_V)\in T_Z\cz$.
We note that
\[
{\rm dim}(\cz)
= \displaystyle  (n-1)^2 + \frac{n(n-1)}{2} + s + \frac{n(n-1)}{2} - s
=\displaystyle  (2n-1)(n-1)\quad\mbox{and}\quad
{\rm dim}(\Rnn) = n^2.
\]
Therefore, the nonlinear equation $F(Z)= \mathbf{0}_{n\times n}$ defined by \eqref{DStIEPEQUATION} is underdetermined on the product manifold  $\cz$ for $n\geq 3$.

A  retraction $R$ on $\cz$ is given by
\BE\label{r:z}
R_\cz(\xi_Z)=(R_C(\xi_C),R_Q(\xi_Q),R_W(\xi_W),R_V(\xi_V)),
\EE
for all $Z=(C,Q,W,V)\in\cz$ and $\xi_Z=(\xi_C,\xi_Q,\xi_W,\xi_V)\in T_Z\cz$.

In the rest of this subsection, we derive the differential of the nonlinear operator $F$ defined by \eqref{DStIEP:G}. The differential
$\mathrm{D}F(Z):T_{Z}\cz \to T_{F(Z)}\Rnn$
of $F$ at a point $Z=(C,Q,W,V)\in\cz$ is determined by
\begin{eqnarray*}
\mathrm{D}F(Z)[\Delta Z] &=& \Delta C + [ Q(\Lambda+ \ca(W) + W + V)Q^T , \Delta QQ^T]\\
&&- Q\big( (B_W\odot \Delta W)^T + \Delta W + \Delta V\big) Q^T,
\end{eqnarray*}
for all $\Delta Z:=(\Delta C,\Delta Q, \Delta W, \Delta V) \in T_Z\cz$, where  the matrix $B_W\in\Rnn$ is defined by
\[
(B_W)_{ij}:=
\left\{
\begin{array}{ll}
\displaystyle  \frac{b_{j/2}^2}{w^2_{ij}}, & (i,j)\in \mathcal{I}_2, \\[4mm]
  0, &  \mbox{otherwise}.
\end{array}
\right.
\]
With respect to the Riemannian metric \eqref{rm:z} on $\cz$ and the Frobenius inner product on  $\Rnn$, via simple calculation, the adjoint $(\mathrm{D}F(Z))^*:T_{F(Z)}\Rnn \to T_{Z}\cz$ of  $\mathrm{D}F(Z)$ is determined  by
\BE\label{dh:adj}
(\mathrm{D}F(Z))^* [ \Delta Y ] =\Big((\mathrm{D}F(Z))^*_1 [ \Delta Y ],(\mathrm{D}F(Z))^*_2 [ \Delta Y ], (\mathrm{D}F(Z))^*_3 [ \Delta Y ], (\mathrm{D}F(Z))^*_4 [ \Delta Y ]\Big)
\EE
for all $\Delta Y \in T_{F(Z)}\Rnn\simeq\Rnn$, where
\BE\label{dhadj}
\left\{
\begin{array}{rcl}
(\mathrm{D}F(Z))^*_1 [ \Delta Y ] &=& \Pi_C(C\odot \Delta Y),\\[2mm]
(\mathrm{D}F(Z))^*_2 [ \Delta Y ] &=& \displaystyle
\frac{1}{2}\Big([ Q(\Lambda +  \ca(W) + W + V)Q^T, (\Delta Y)^T ] \\[2mm]
& & + [ Q(\Lambda + \ca(W) + W + V)^TQ^T, \Delta Y ]\Big)Q,\\[2mm]
(\mathrm{D}F(Z))^*_3 [ \Delta Y ] &=& -W\odot  \big((Q^T\Delta Y Q) + B_W\odot (Q^T\Delta Y^T Q)\big),\\[2mm]
(\mathrm{D}F(Z))^*_4 [ \Delta Y ] &=& -S\odot\big(Q^T\Delta Y Q\big).
\end{array}
\right.
\EE

\subsection{Riemannian inexact Newton-CG methods}
In this subsection, we present both monotone and nonmonotone Riemannian inexact Newton-CG methods for solving  \eqref{DStIEPEQUATION}.
For any $Z\in\cz$, let $\mathrm{id}_{T_{F(Z)} \Rnn}$ be the identity operator on $T_{F(Z)} \Rnn$.
To solve the underdetermined nonlinear equation \eqref{DStIEPEQUATION}, we first adopt the Riemannian inexact Newton-CG method proposed in \cite{ZBJ18}. The algorithm can be stated as follows.

\begin{algorithm} \label{nm}
 {\rm (\bf{Monotone Riemannian inexact Newton-CG method})}
\begin{description}
\item [{Step 0.}] Choose a starting  point $Z_0\in \cz$, $\epsilon>0$, $\overline{\sigma}_{\max}, \overline{\eta}_{\max}, t\in (0,1)$, $0<\theta_{\min}<\theta_{\max}<1$. Let $k:=0$.

\item [{Step 1.}] If $\|F(Z_k)\|_F<\epsilon$, stop.
\item [{Step 2.}] Apply the conjugate gradient (CG) method  {\rm \cite{GV13}} to solving
    \BE\label{eq:mle}
    \big(\mathrm{D}F(Z_k)\circ (\mathrm{D}F(Z_k))^* +\overline{\sigma}_k \mathrm{id}_{T_{F(Z_k)} \Rnn} \big)[\Delta Y_k] = - F(Z_k),
    \EE
    for $\Delta Y_k \in T_{F(Z_k)} \Rnn$ such that
   \BE\label{eq:mtol1}
    \|\big( \mathrm{D}F(Z_k) \circ (\mathrm{D}F(Z_k))^* +\overline{\sigma}_k \mathrm{id}_{T_{F(Z_k)} \Rnn}\big)[ \Delta Y_k ]
    + F(Z_k)\big\|_{F} \le  \overline{\eta}_k\|  F(Z_k) \|_{F} ,
    \EE
    and
    \BE\label{eq:mtol2}
    \|\mathrm{D}F(Z_k) \circ (\mathrm{D}F(Z_k))^* [ \Delta Y_k ] + F(Z_k)\|_{F} <  \|  F(Z_k) \|_{F} ,
    \EE
    where $\overline{\sigma}_k:=\min\{\overline{\sigma}_{\max},\| F(Z_k)\|_{F}\}$, $\overline{\eta}_k:=\min\{\overline{\eta}_{\max},\| F(Z_k)\|_{F}\}$. Set
    \[
    \widehat{\Delta Z}_k= (\mathrm{D}F(Z_k))^*[ \Delta Y_k ],\quad
    \widehat{\eta}_k:= \frac{\|\mathrm{D}F(Z_k) [ \widehat{\Delta Z}_k ] + F(Z_k)\|_{F}}{\| F(Z_k)\|_{F}}.
    \]

\item [{Step 3.}] Evaluate $F\big(R_{Z_k}(\widehat{\Delta Z}_k)\big)$.
                  Set $\eta_k = \widehat{\eta}_k$ and $\Delta Z_k = \widehat{\Delta Z}_k$.

                 {\bf Repeat} until $\|F\big(R_{Z_k}(\Delta Z_k)\big)\|_F\le (1-t(1-\eta_k)) \|F(Z_k)\|_F$.

                  \qquad\quad Choose $\theta \in [\theta_{\min},\theta_{\max}]$.

                  \qquad\quad Replace  $\Delta Z_k$ by $\theta \Delta Z_k $ and $\eta_k$ by $1-\theta(1-\eta_k)$.

                  {\bf end (Repeat)}

                  Set
                  \[
                  Z_{k+1} := R_{Z_k} (\Delta Z_k ).
                  \]
\item [{Step 4.}] Replace $k$ by $k+1$ and go to  {\bf Step 1}.
\end{description}
\end{algorithm}

On Algorithm \ref{nm} for the PDStIEP  \eqref{DStIEPEQUATION}, we have the following remark. Define the merit function
\BE\label{mf:z}
f(Z):=\frac{1}{2}\|F(Z)\|_F^2.
\EE
The Riemannian gradient of $f$ at a point $Z_k\in\cz$  is given by \cite[p.185]{AMS08}:
\BE\label{GRAVEC}
\grad f(Z_k) = (DF(Z_k))^*[F(Z_k)].
\EE
We note that  the linear equation \eqref{eq:mle} is solved such that the condition \eqref{eq:mtol2} is satisfied. Then using \eqref{eq:mtol2} we have
\begin{eqnarray*}
&&\langle\widehat{\Delta Z}_k,\grad f(Z_k)\rangle=\langle(\mathrm{D}F(Z_k))^*[ \Delta Y_k ],(DF(Z_k))^*[F(Z_k)]\rangle \\
&=&\langle \mathrm{D}F(Z_k)\circ(DF(Z_k))^*[\Delta Y_k],F(Z_k)\rangle_F \\
&=& \langle \mathrm{D}F(Z_k)\circ(DF(Z_k))^*[\Delta Y_k]+F(Z_k),F(Z_k)\rangle_F-\|F(Z_k)\|_F^2<0.
\end{eqnarray*}
Hence, $\widehat{\Delta Z}_k= (\mathrm{D}F(Z_k))^*[ \Delta Y_k ]$ is a descent direction of $f$.
However, it is too strict to solve \eqref{eq:mle} satisfying both \eqref{eq:mtol1} and \eqref{eq:mtol2}. As a classical inexact Newton method, it is  natural  to solve \eqref{eq:mle} satisfying only \eqref{eq:mtol1}. In this case, the search direction $\widehat{\Delta Z}_k= (\mathrm{D}F(Z_k))^*[ \Delta Y_k ]$ may be just an approximate Newton direction of $f$ at $Z_k$. This means that $\widehat{\Delta Z}_k$ is not necessarily a descent direction of $f$ at $Z_k$ especially  when $\mathrm{D} F(Z_k)$ is not surjective and thus the monotone line search in  Step 3 of Algorithm \ref{nm} may not be satisfied.  Sparked by the line  search strategy in \cite{LF99}, we propose the following nonmonotone Riemannian inexact Newton-CG method for solving  the PDStIEP  \eqref{DStIEPEQUATION}. Here, we provide a new nonmonotone line search as follows. Let $\{\gamma_k>0\}$ be a sequence such that
\[
\sum_{k=0}^{\infty}\gamma_k = \gamma <\infty.
\]
We determine the stepsize $\psi>0$ such that
\BE\label{nondes}
\|F\big(R_{Z_k}(\psi\Delta Z_k)\big)\|_F^2 - \|F(Z_k)\|_F^2  \leq
      -\delta \psi^2 | \langle  \grad f(Z_k), \Delta Z_k \rangle|  + \gamma_k\|F(Z_k)\|_F^2,
\EE
where  $\delta>0$ is a constant. We point out that, when $\psi\to 0^{+}$, the left-hand side of (\ref{nondes}) tends to zero,
while the right hand side tends to the positive constant $\gamma_k\|F(Z_k)\|_F^2$.
Thus the line search step  determined by (\ref{nondes})  is well-defined.

Based on the above analysis, we describe a nonmonotone  Riemannian inexact Newton-CG algorithm as follows.
\begin{algorithm} \label{nm1}
 {\rm (\bf Nonmonotone  Riemannian inexact Newton-CG method)}
\begin{description}
\item [{Step 0.}] Choose a starting point $Z_0 \in \cz$, $\epsilon>0$, $\tau$, $\rho$, $\overline{\sigma}_{\max} \in (0,1)$,
                  $\delta \in \big(0,\frac{1}{2}\big)$, and two positive sequences
                  $\{\eta_k\}$ and $\{\gamma_k\}$ such that
                  \BE\label{etakepk}
                  \lim_{k\to \infty} \eta_k = 0 \quad \mbox{and} \quad
                  \sum_{k=0}^{\infty}\gamma_k = \gamma <\infty.
                  \EE
Let $k:=0$.
\item [{Step 1.}] If $\|F(Z_k)\|_F<\epsilon$, stop.
\item [{Step 2.}] Apply the CG method to find an approximate solution $\Delta Y_k\in T_{F(Z_k)} \Rnn$ to
    \BE\label{eq:le}
    ( \mathrm{D}F(Z_k) \circ (\mathrm{D}F(Z_k))^* +\overline{\sigma}_k \mathrm{id}_{T_{F(Z_k)} \Rnn})  [ \Delta Y_k ]
      = - F(Z_k)
    \EE
    such that
    \BE\label{eq:tol1}
    \begin{array}{l}
    \|( \mathrm{D}F(Z_k) \circ (\mathrm{D}F(Z_k))^* +\overline{\sigma}_k \mathrm{id}_{T_{F(Z_k)} \Rnn})[ \Delta Y_k ]
    + F(Z_k)\|_F\leq  \overline{\eta}_k\|  F(Z_k) \|_F ,
    \end{array}
    \EE
    where
    \BE\label{def:skek}
    \overline{\sigma}_k:=\min\{\overline{\sigma}_{\max},\| F(Z_k)\|_F\} \quad \mbox{and} \quad \overline{\eta}_k:=\min\{\eta_k,\| F(Z_k)\|_F\}.
    \EE
Set
\BE\label{eq:direstep}
    \Delta Z_k:= (\mathrm{D}F(Z_k))^*[ \Delta Y_k ].
 \EE

\item [{Step 3.}]
    If
    \BE\label{FDESCENTV}
    \big\|F\big(R_{Z_k}(\Delta Z_k)\big)\big\|_F \leq \tau \|F(Z_k)\|_F,
    \EE
    then set $\alpha_k=1$;  Otherwise, determine the stepsize $\alpha_k:=\max\{ \rho^l,l=0,1,2,\ldots\}$ such that
    \BE\label{DES}
      \|F\big(R_{Z_k}(\alpha_k\Delta Z_k)\big)\|_F^2 - \|F(Z_k)\|_F^2  \leq
      -\delta \alpha_k^2 | \langle  \grad f(Z_k), \Delta Z_k \rangle|  + \gamma_k\|F(Z_k)\|_F^2.
    \EE
    Set
    \BE\label{def:NEWPP}
    Z_{k+1} := R_{Z_k}(\alpha_k\Delta Z_k).
    \EE

\item [{Step 4.}] Replace $k$ by $k+1$ and go to {\bf Step 1}.
\end{description}
\end{algorithm}
\begin{remark}
From the convergence analysis in  {\rm  Section \ref{sec:3}} below, we observe that the global and quadratic convergence of
{\rm Algorithm \ref{nm1}} can be established under much milder assumptions than  {\rm  Algorithm \ref{nm}}. We also see that the  infinite sequence generated by   {\rm Algorithm \ref{nm1}} converges to a stationary point of $f$ without any additional assumption.
\end{remark}

\section{Convergence analysis}\label{sec:3}
In this section, we establish global and quadratic convergence of Algorithms \ref{nm} and \ref{nm1}. We first note that the global and quadratic convergence of  Algorithm \ref{nm} can be established as in \cite{ZBJ18} under the following assumption:
\begin{assumption} \label{ass:nons}
Suppose {\rm Algorithm \ref{nm}} does not break down, $\sum_{k=0}^{\infty}(1-\eta_k)$ is divergent
and $\mathrm{D} F(Z_*):T_{Z_*}\cz\to T_{F(Z_*)}\Rnn$ is surjective,
where $Z_*\in\cz$ is an accumulation point of the sequence $\{Z_k\}$ generated by {\rm Algorithm  \ref{nm}}.
\end{assumption}

In the rest of this section, we focus on the convergence analysis of  Algorithm \ref{nm1}.
The pullback $\widehat{f}: T\cz \to  \R$ of $f$ defined by \eqref{mf:z} with respect to the retraction $R$ \eqref{r:z} on $\cz$ is defined by \cite[p.55]{AMS08}:
\[
\widehat{f}(\xi)= f(R(\xi)),\quad \forall \xi\in T\cz:=\cup_{Z\in\cz}T_Z\cz.
\]
For any $Z\in\cz$, $\widehat{f}_Z: T_Z\cz \to\R$ denotes the restriction of $\widehat{f}$ to $T_Z \cz$ \cite[(4.3)]{AMS08}, i.e.:
\BE \label{def:pbr}
\widehat{f}_Z (\xi_Z)= f(R_Z(\xi_Z)),\quad \forall \xi_Z\in T_Z\cz.
\EE
By the local rigidity of $R$, we have \cite[(4.4)]{AMS08}:
\BE\label{GRADWIDEf}
\grad \widehat{f}_Z(0_Z) = \grad f(Z), \quad \forall Z\in \cz.
\EE

Let $\Omega$ denote the level set of $\|F(Z)\|_F$ defined by
\BE\label{LEVELSET}
\Omega: = \big\{Z\in \cz \ | \ \|F(Z)\|_F \leq e^{\frac{\gamma}{2}}\|F(Z_0)\|_F \big\}.
\EE
Since (\ref{FDESCENTV}) or (\ref{DES}) holds, we have
\BE
\|F(Z_{k+1})\|_F \leq (1+\epsilon_k)^{1/2} \|F(Z_k)\|_F \leq (1+\epsilon_k) \|F(Z_k)\|_F,\quad \forall k\ge 1.
\EE

We note that the  doubly stochastic multinomial manifold $\mathds{DP}_n$ and the orthogonal group $\co(n)$ are  compact and the retractions on $\cw$ and $\cv$ are exponential retractions. Then  there exist two scalars $\nu >0$ and $\mu_{\nu} >0$ such that \cite[p. 149]{AMS08}
\BE\label{retr:bd-2}
\nu\| \Delta Z \| \geq  {\rm dist}\big(Z,R_{Z} (\Delta Z) \big),
\EE
for all $Z\in \cz$ and $\Delta Z\in T_Z\cz$ with $\| \Delta Z\|\leq \mu_{\nu}$,
where ``{\rm dist}" means the Riemannian distance on $\cz$.

We first give the main results on  the global and  quadratic convergence of Algorithm {\rm \ref{nm1}}.
\begin{theorem}\label{thm:gc1}
Suppose {\rm Algorithm \ref{nm1}} generates an infinite  sequence $\{Z_{k}\}$. Then every accumulation point $Z_*$ of $\{Z_k\}$
is a stationary point of $f$.
\end{theorem}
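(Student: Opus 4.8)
The plan is to argue by contradiction: I would assume that some accumulation point $Z_*$ of $\{Z_k\}$ satisfies $\grad f(Z_*)\neq 0$ and derive a contradiction from the nonmonotone line search \eqref{DES}. First I would record two preliminaries. Each step satisfies $\|F(Z_{k+1})\|_F\le(1+\gamma_k)^{1/2}\|F(Z_k)\|_F$ (the inequality preceding the theorem), so $\prod_k(1+\gamma_k)\le e^{\gamma}$ places the whole sequence in the level set $\Omega$ of \eqref{LEVELSET}; moreover $b_k:=\|F(Z_k)\|_F/\prod_{j<k}(1+\gamma_j)^{1/2}$ is nonincreasing and bounded below, and the product converges to a positive limit, whence $\{\|F(Z_k)\|_F\}$ converges to some $L\ge 0$. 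Since $Z_{k_i}\to Z_*$ for a subsequence and $F$ is continuous, $L=\|F(Z_*)\|_F$. Because $\grad f(Z_*)=(\mathrm{D}F(Z_*))^*[F(Z_*)]$ by \eqref{GRAVEC}, the hypothesis $\grad f(Z_*)\neq 0$ forces $F(Z_*)\neq 0$, hence $L>0$. I would also note that, since $\mathds{DP}_n$ and $\co(n)$ are compact and the entries of $Q(\La+\ca(W)+W+V)Q^T$ are controlled by $\|F(Z_k)\|_F$ (forcing the $\cv$-entries bounded and the $\cw$-entries bounded above and away from $0$), the closure $\overline{\{Z_k\}}$ is compact; this will supply the uniform estimates needed at the end.

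Next I would show the search directions converge along the subsequence. By \eqref{def:skek} and $L>0$ we have $\bar\sigma_{k_i}\to\sigma_*:=\min\{\bar\sigma_{\max},L\}>0$ and, by \eqref{etakepk}, $\bar\eta_{k_i}\to 0$, so the residual in \eqref{eq:tol1} tends to $0$. Writing $A_k:=\mathrm{D}F(Z_k)\circ(\mathrm{D}F(Z_k))^*$ and $A_*$ for its value at $Z_*$, continuity of $\mathrm{D}F$ and $(\mathrm{D}F)^*$ gives $A_{k_i}+\bar\sigma_{k_i}\,\mathrm{id}\to A_*+\sigma_*\,\mathrm{id}$, which is invertible; hence $\Delta Y_{k_i}\to\Delta Y_*=-(A_*+\sigma_*\,\mathrm{id})^{-1}[F(Z_*)]$ and, by \eqref{eq:direstep}, $\Delta Z_{k_i}\to\Delta Z_*=(\mathrm{D}F(Z_*))^*[\Delta Y_*]$. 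Using \eqref{GRAVEC} and adjointness,
\[
\langle\grad f(Z_{k_i}),\Delta Z_{k_i}\rangle\ \longrightarrow\ \langle\grad f(Z_*),\Delta Z_*\rangle=-\big\langle F(Z_*),\,A_*(A_*+\sigma_*\,\mathrm{id})^{-1}[F(Z_*)]\big\rangle_F.
\]
Since $A_*(A_*+\sigma_*\,\mathrm{id})^{-1}=g(A_*)$ with $g(t)=t/(t+\sigma_*)$ vanishing only at $t=0$, and $\|\grad f(Z_*)\|^2=\langle F(Z_*),A_*[F(Z_*)]\rangle_F$, this limit is $0$ iff $\grad f(Z_*)=0$. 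Under the contradiction hypothesis it is therefore a strictly negative number $-c$ with $c>0$, so $|\langle\grad f(Z_{k_i}),\Delta Z_{k_i}\rangle|\to c>0$.

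I would then extract a summable decrease. If \eqref{FDESCENTV} held for infinitely many $k$, then $\|F(Z_{k+1})\|_F\le\tau\|F(Z_k)\|_F$ along that subsequence would give $L\le\tau L$ with $\tau<1$, contradicting $L>0$; hence \eqref{FDESCENTV} holds only finitely often and \eqref{DES} is enforced for all $k\ge K_0$. Summing \eqref{DES} from $K_0$, the telescoping term $\sum(\|F(Z_k)\|_F^2-\|F(Z_{k+1})\|_F^2)$ is bounded by $\|F(Z_{K_0})\|_F^2$, while $\sum\gamma_k\|F(Z_k)\|_F^2\le(\sup_k\|F(Z_k)\|_F^2)\,\gamma<\infty$; therefore $\sum_k\alpha_k^2|\langle\grad f(Z_k),\Delta Z_k\rangle|<\infty$, and in particular $\alpha_k^2|\langle\grad f(Z_k),\Delta Z_k\rangle|\to 0$. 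Combining this with $|\langle\grad f(Z_{k_i}),\Delta Z_{k_i}\rangle|\to c>0$ forces $\alpha_{k_i}\to 0$.

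The main obstacle is the final backtracking step. For large $i$ we have $\alpha_{k_i}<1$, so the trial stepsize $\alpha_{k_i}/\rho$ violated \eqref{DES}; dropping the nonnegative $\gamma_{k_i}$ term and writing $\widehat f_{Z}$ for the pullback \eqref{def:pbr},
\[
\frac{\widehat f_{Z_{k_i}}\big((\alpha_{k_i}/\rho)\Delta Z_{k_i}\big)-\widehat f_{Z_{k_i}}(0)}{\alpha_{k_i}/\rho}>-\delta\,\frac{\alpha_{k_i}}{\rho}\,|\langle\grad f(Z_{k_i}),\Delta Z_{k_i}\rangle|.
\]
The right-hand side tends to $0$ since $\alpha_{k_i}\to 0$ and the inner product stays bounded. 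The delicate point is to show the left-hand side tends to $\langle\grad\widehat f_{Z_*}(0),\Delta Z_*\rangle=\langle\grad f(Z_*),\Delta Z_*\rangle=-c$ (via \eqref{GRADWIDEf}), which would yield $-c\ge 0$, contradicting $c>0$ and finishing the proof. The difficulty is that the base point $Z_{k_i}$, the direction $\Delta Z_{k_i}$, and the stepsize all vary at once, so differentiability of a single pullback is insufficient; I would instead use a \emph{uniform} first-order expansion. By the mean value theorem the quotient equals $\langle\grad\widehat f_{Z_{k_i}}(\theta_i(\alpha_{k_i}/\rho)\Delta Z_{k_i}),\Delta Z_{k_i}\rangle$ for some $\theta_i\in(0,1)$, and the joint continuity of $(Z,\xi)\mapsto\grad\widehat f_Z(\xi)$ on the compact set $\overline{\{Z_k\}}$ — resting on smoothness of $F$ and of the retraction $R$ in \eqref{r:z} (including the Sinkhorn--Knopp map on $\mathds{DP}_n$) — lets me pass to the limit using $(\alpha_{k_i}/\rho)\Delta Z_{k_i}\to 0$, $Z_{k_i}\to Z_*$, and $\Delta Z_{k_i}\to\Delta Z_*$. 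Making this joint continuity and uniform expansion precise, with the compactness of the iterate set and \eqref{retr:bd-2} as the chief tools, is the real work of the argument.
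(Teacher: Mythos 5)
Your proposal is correct and takes essentially the same route as the paper's proof: it reconstructs Lemmas \ref{pro:FDESCENT}, \ref{lemma:akzkconverges}, and \ref{lemma23} inline and then applies the same mean-value-theorem argument to the pullback $\widehat{f}_{Z_{k_i}}$ at a rejected trial step $\alpha_{k_i}/\rho$. The only differences are organizational: you argue by a single contradiction from $\grad f(Z_*)\neq 0$, which forces $\alpha_{k_i}\to 0$ and removes the paper's case split on $\liminf_k\alpha_k$, and your spectral remark that the limiting value of $\langle \grad f(Z_{k_i}),\Delta Z_{k_i}\rangle$ vanishes if and only if $\grad f(Z_*)=0$ is precisely the content of the paper's closing kernel/image orthogonality step (it also tightens the strict-negativity claim the paper invokes in its first case).
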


\begin{theorem}\label{thm:gc2}
Let $Z_*$ be an accumulation point of
an infinite  sequence $\{Z_{k}\}$ generated by {\rm Algorithm {\rm \ref{nm1}}}.
If $\mathrm{D} F(Z_*):T_{Z_*}\cz \to T_{F(Z_*)}\Rnn$ is surjective, then the sequence  $\{Z_k\}$ converges to $Z_*$ and $F(Z_*) = {\bf 0}_{n\times n}$.
\end{theorem}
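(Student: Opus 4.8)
The plan is to prove the two assertions separately: first that $F(Z_*)=\mathbf{0}_{n\times n}$, and then that the whole sequence converges to $Z_*$. For the first, I would invoke Theorem \ref{thm:gc1}: the accumulation point $Z_*$ is a stationary point of $f$, so by \eqref{GRAVEC} we have $\grad f(Z_*)=(\mathrm{D}F(Z_*))^*[F(Z_*)]=\mathbf{0}$. Since $\mathrm{D}F(Z_*)$ is surjective, its adjoint $(\mathrm{D}F(Z_*))^*$ is injective, which forces $F(Z_*)=\mathbf{0}_{n\times n}$. I would also record that $\lim_{k\to\infty}\|F(Z_k)\|_F$ exists and equals $0$: in both branches \eqref{FDESCENTV} and \eqref{DES} one gets $\|F(Z_{k+1})\|_F^2\le(1+\gamma_k)\|F(Z_k)\|_F^2$, so dividing by the partial product $\prod_{j=0}^{k-1}(1+\gamma_j)$ yields a nonincreasing nonnegative sequence; since $\sum_k\gamma_k=\gamma<\infty$ the product converges to a finite positive limit, whence $\|F(Z_k)\|_F$ converges. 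Evaluating this limit along the subsequence $Z_{k_j}\to Z_*$ and using $F(Z_*)=\mathbf{0}$ gives $\lim_{k\to\infty}\|F(Z_k)\|_F=0$.

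Next I would fix a neighborhood $\cn$ of $Z_*$ on which uniform local estimates hold. By continuity and surjectivity of $\mathrm{D}F(Z_*)$, the self-adjoint operator $\mathrm{D}F(Z)\circ(\mathrm{D}F(Z))^*$ is uniformly positive definite on $\cn$; hence the regularized system \eqref{eq:le} with residual control \eqref{eq:tol1} gives $\|\Delta Y_k\|\le C\|F(Z_k)\|_F$ and therefore $\|\Delta Z_k\|\le C'\|F(Z_k)\|_F$ for iterates in $\cn$. Combining \eqref{eq:tol1} with the identity $\mathrm{D}F(Z_k)[\Delta Z_k]=\mathrm{D}F(Z_k)\circ(\mathrm{D}F(Z_k))^*[\Delta Y_k]$ and the decays $\overline{\eta}_k,\overline{\sigma}_k\to 0$ from \eqref{def:skek} together with $\|F(Z_k)\|_F\to0$, I would obtain an inexact-Newton bound $\|\mathrm{D}F(Z_k)[\Delta Z_k]+F(Z_k)\|_F\le\widehat{\eta}_k\|F(Z_k)\|_F$ with $\widehat{\eta}_k\to0$. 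A first-order Taylor expansion of $\xi\mapsto F(R_{Z_k}(\xi))$ at $0_{Z_k}$, whose differential is $\mathrm{D}F(Z_k)$ by local rigidity of the retraction, together with the quadratic retraction remainder then yields $\|F(R_{Z_k}(\Delta Z_k))\|_F\le(\widehat{\eta}_k+L(C')^2\|F(Z_k)\|_F)\|F(Z_k)\|_F$. Because $\widehat{\eta}_k\to0$ and $\|F(Z_k)\|_F\to0$, for all large $k$ with $Z_k\in\cn$ the test \eqref{FDESCENTV} is satisfied, the unit step $\alpha_k=1$ is accepted, and $\|F(Z_{k+1})\|_F\le\tau\|F(Z_k)\|_F$.

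The convergence of the whole sequence then follows by a bootstrapping argument. I would fix a closed ball $\overline{B}(Z_*,r)\subseteq\cn$ and, using the subsequence, choose $k_0$ so that both $\dist(Z_{k_0},Z_*)<r/2$ and $\tfrac{\nu C'}{1-\tau}\|F(Z_{k_0})\|_F<r/2$. The retraction bound \eqref{retr:bd-2} gives $\dist(Z_{k+1},Z_k)\le\nu\|\Delta Z_k\|\le\nu C'\|F(Z_k)\|_F$. By induction, as long as the iterates stay in the ball the unit step is accepted, $\|F(Z_k)\|_F$ contracts by the factor $\tau$, and the accumulated displacement $\sum_{i\ge k_0}\dist(Z_{i+1},Z_i)$ stays below $r/2$, so the iterates never leave $\overline{B}(Z_*,r)$. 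Summability of $\sum_k\dist(Z_{k+1},Z_k)$ shows that $\{Z_k\}_{k\ge k_0}$ is Cauchy and hence convergent; since $Z_*$ is an accumulation point of $\{Z_k\}$, its limit must be $Z_*$.

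The step I expect to be most delicate is the bootstrapping: one must close the induction so that the geometric decay of $\|F(Z_k)\|_F$ keeps every iterate inside the neighborhood $\cn$ where both the unit-step acceptance and the uniform bound $\|\Delta Z_k\|\le C'\|F(Z_k)\|_F$ are valid, thereby reconciling the nonmonotone line search of Algorithm \ref{nm1} with the locally Newton-like contraction near the surjective point $Z_*$.
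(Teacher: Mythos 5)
Your proposal is correct, and its first two stages coincide with the paper's proof: the paper likewise combines Theorem \ref{thm:gc1} with surjectivity to get $F(Z_*)=\mathbf{0}_{n\times n}$, invokes Lemma \ref{pro:FDESCENT} to get $\|F(Z_k)\|_F\to 0$, and then builds exactly the same local machinery near $Z_*$ (uniform bounds on $\lambda_{\min}\big(\mathrm{D}F\circ(\mathrm{D}F)^*\big)$ and on the pseudoinverse via Lemma \ref{lem:dzk}, a Taylor remainder estimate for $F\circ R$, and the conclusion that \eqref{FDESCENTV} holds with unit step and $\|F(Z_{k+1})\|_F\le\tau\|F(Z_k)\|_F$ inside a ball $B(Z_*,\hat\delta)$). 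Where you genuinely diverge is the last step. The paper argues by contradiction: assuming $\{Z_k\}$ does not converge to $Z_*$, it extracts excursion indices $\{m_j\},\{n_j\}$ with $Z_{m_j}\in B_{\hat\delta/2}(Z_*)$, the next $n_j$ iterates inside $B_{\hat\delta}(Z_*)$, and $Z_{m_j+n_j}$ outside; the displacement bound \eqref{retr:bd-2} plus the contraction turn the excursion length into a telescoping sum bounded by $\frac{2\nu\|(\mathrm{D}F(Z_*))^{\dag}\|}{1-\tau}\big(\|F(Z_{m_j})\|_F-\|F(Z_{m_j+n_j})\|_F\big)\to 0$, contradicting that each excursion must travel at least $\hat\delta/2$. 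You instead run a direct capture-basin induction: enter the ball with $\dist(Z_{k_0},Z_*)<r/2$ and residual small enough that the cumulative displacement $\frac{\nu C'}{1-\tau}\|F(Z_{k_0})\|_F$ stays below $r/2$, so the iterates never escape, and summability of the displacements makes the tail Cauchy. Both closures rest on the same two quantitative ingredients (per-step displacement controlled by $\|F(Z_k)\|_F$, and geometric decay of $\|F(Z_k)\|_F$ in the ball); your version is constructive and identifies the limit directly, at the cost of having to order the induction carefully (ball membership at step $k+1$ before invoking unit-step acceptance at $k+1$), while the paper's contradiction avoids that bookkeeping but needs the excursion construction. One small point to make explicit in your write-up: the retraction bound \eqref{retr:bd-2} only applies when $\|\Delta Z_k\|\le\mu_{\nu}$, so you should note, as the paper does via \eqref{DELTAXKBD}, that $\|\Delta Z_k\|\le C'\|F(Z_k)\|_F\to 0$ guarantees this for all $k$ beyond your chosen $k_0$.
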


On the quadratic convergence of  Algorithm {\rm \ref{nm1}}, we have the following result.
\begin{theorem}\label{thm:qc}
Let $Z_*$ be an accumulation point of an infinite sequence $\{Z_{k}\}$ generated by {\rm Algorithm {\rm \ref{nm1}}}. If $\mathrm{D} F(Z_*):T_{Z_*}\cz \to T_{F(Z_*)}\Rnn$ is surjective, then the sequence  $\{Z_k\}$ converges to $Z_*$ quadratically.
\end{theorem}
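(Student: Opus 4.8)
The plan is to first prove that the residual norm $\|F(Z_k)\|_F$ converges to zero quadratically with the unit step eventually accepted, and then to transfer this to quadratic convergence of the iterates $Z_k$ in the Riemannian distance. Theorem \ref{thm:gc2} already supplies the two facts I build on: the whole sequence $\{Z_k\}$ converges to $Z_*$ and $F(Z_*)=\mathbf{0}_{n\times n}$. Throughout I would work inside a fixed neighborhood $\mathcal{U}$ of $Z_*$ in $\cz$; since $\mathds{DP}_n$ and $\co(n)$ are compact and $\mathcal{U}$ can be taken bounded in the $\cw\times\cv$ factors, the closure of $\mathcal{U}$ is compact and every smooth object below (the differential $\mathrm{D}F$, the adjoint $(\mathrm{D}F)^*$, and the map $F\circ R_Z$) has uniformly bounded derivatives and uniform Lipschitz constants on $\mathcal{U}$.

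First step (uniform invertibility and step bounds). Surjectivity of $\mathrm{D}F(Z_*)$ is equivalent to positive definiteness of the self-adjoint operator $\mathrm{D}F(Z_*)\circ(\mathrm{D}F(Z_*))^*$ on $T_{F(Z_*)}\Rnn$; let $c>0$ be a lower bound for its spectrum. By continuity of $Z\mapsto \mathrm{D}F(Z)\circ(\mathrm{D}F(Z))^*$ and $Z_k\to Z_*$, for all large $k$ the spectrum of $\mathrm{D}F(Z_k)\circ(\mathrm{D}F(Z_k))^*$, and hence of the regularized operator in \eqref{eq:le} (which only adds $\overline{\sigma}_k\,\mathrm{id}\succeq 0$), stays bounded below by $c/2$. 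Consequently the inverse of the operator in \eqref{eq:le} has norm at most $2/c$, and from \eqref{eq:tol1} together with $\overline{\eta}_k\le 1$ I would deduce $\|\Delta Y_k\|_F=O(\|F(Z_k)\|_F)$, and then, by \eqref{eq:direstep}, $\|\Delta Z_k\|=O(\|F(Z_k)\|_F)$.

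Second step (quadratic decay of the residual and acceptance of the unit step). Rewriting \eqref{eq:tol1} with $(\mathrm{D}F(Z_k)\circ(\mathrm{D}F(Z_k))^*)[\Delta Y_k]=\mathrm{D}F(Z_k)[\Delta Z_k]$ gives $\|\mathrm{D}F(Z_k)[\Delta Z_k]+F(Z_k)\|_F\le \overline{\eta}_k\|F(Z_k)\|_F+\overline{\sigma}_k\|\Delta Y_k\|_F$. Near $Z_*$ the definitions \eqref{def:skek} force $\overline{\eta}_k\le\|F(Z_k)\|_F$ and $\overline{\sigma}_k=\|F(Z_k)\|_F$, so with the first-step bound on $\|\Delta Y_k\|_F$ the right-hand side is $O(\|F(Z_k)\|_F^2)$; that is, $\Delta Z_k$ is an inexact Newton step whose linearized residual is second order. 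Since $\mathrm{D}(F\circ R_{Z_k})(0_{Z_k})=\mathrm{D}F(Z_k)$ for a retraction \cite{AMS08}, a second-order Taylor expansion of $F\circ R_{Z_k}$ at $0_{Z_k}$ (with uniform remainder on $\mathcal{U}$) yields $\|F(R_{Z_k}(\Delta Z_k))\|_F\le \|F(Z_k)+\mathrm{D}F(Z_k)[\Delta Z_k]\|_F+O(\|\Delta Z_k\|^2)=O(\|F(Z_k)\|_F^2)$. In particular $\|F(R_{Z_k}(\Delta Z_k))\|_F\le\tau\|F(Z_k)\|_F$ once $\|F(Z_k)\|_F$ is small enough, so the test \eqref{FDESCENTV} is met, $\alpha_k=1$, and by \eqref{def:NEWPP} the pure step is taken; hence $\|F(Z_{k+1})\|_F\le C\|F(Z_k)\|_F^2$ for some constant $C$ and all large $k$.

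Third step, and the main obstacle. Using \eqref{retr:bd-2} with $\alpha_k=1$ and the first-step bound, $\dist(Z_k,Z_{k+1})\le \nu\|\Delta Z_k\|=O(\|F(Z_k)\|_F)$; because $\|F(Z_k)\|_F$ decays quadratically, the tail $\sum_{j\ge k}\|F(Z_j)\|_F$ is dominated by its first term, so summing the triangle inequality gives $\dist(Z_k,Z_*)\le \nu\sum_{j\ge k}\|\Delta Z_j\|=O(\|F(Z_k)\|_F)$. Combining this with the elementary Lipschitz bound $\|F(Z_k)\|_F=\|F(Z_k)-F(Z_*)\|_F\le L\,\dist(Z_k,Z_*)$ gives $\dist(Z_{k+1},Z_*)=O(\|F(Z_{k+1})\|_F)=O(\|F(Z_k)\|_F^2)=O(\dist(Z_k,Z_*)^2)$, the claimed quadratic convergence. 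The subtle point, which is conceptual rather than computational, is that the system is underdetermined ($\dim\cz>\dim\Rnn$ for $n\ge3$), so $Z_*$ is not an isolated zero---$F^{-1}(\mathbf{0}_{n\times n})$ is locally a submanifold through $Z_*$---and there is \emph{no} lower error bound $\dist(Z_k,Z_*)\lesssim\|F(Z_k)\|_F$ as in the classical square case. What saves the argument is that the step $\Delta Z_k=(\mathrm{D}F(Z_k))^*[\Delta Y_k]$ lies in the range of $(\mathrm{D}F(Z_k))^*$, i.e.\ orthogonal to $\ker \mathrm{D}F(Z_k)$; this is exactly what makes the steps summable and pins the iterates to the single limit $Z_*$, so that only the two \emph{upper} bounds above are needed. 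The remaining technical care is securing the uniform constants (uniform invertibility and uniform Taylor remainder), which is exactly what the compactness of $\mathcal{U}$ noted at the outset provides.
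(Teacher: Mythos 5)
Your proposal is correct and takes essentially the same route as the paper: the paper's own proof of Theorem~\ref{thm:qc} is just a pointer to \cite[Theorem 3]{ZBJ18}, and the argument there is exactly your three steps---uniform invertibility of the regularized normal operator near $Z_*$, an $O(\|F(Z_k)\|_F^2)$ linearized residual because $\overline{\sigma}_k,\overline{\eta}_k\le\|F(Z_k)\|_F$, a Taylor expansion of the pullback giving eventual acceptance of the unit step and quadratic residual decay, and the telescoping-distance argument (which also appears in the paper's proof of Theorem~\ref{thm:gc2}, where it yields the linear decay that your argument sharpens to quadratic). One small caveat: your closing remark that orthogonality of $\Delta Z_k$ to $\ker \mathrm{D}F(Z_k)$ is ``what makes the steps summable'' is inessential---summability already follows from $\|\Delta Z_k\|=O(\|F(Z_k)\|_F)$ together with the quadratic residual decay, and the pinning of the iterates to the single limit $Z_*$ is supplied by Theorem~\ref{thm:gc2}, which you correctly invoke at the outset.
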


Next, we establish the global and  quadratic convergence of Algorithm {\rm \ref{nm1}}. First, we have the following result on the convergence of $\{\|F(Z_k)\|_F\}$. The proof is similar to \cite[Lemma 3.1]{LF99} and thus we omit it here.
\begin{lemma}\label{pro:FDESCENT}
Let $\{Z_k\}$ be a sequence generated by  {\rm Algorithm \ref{nm1}}. Then $\{Z_k\}$ is contained in $\Omega$. Moreover, the sequence $\{\|F(Z_k)\|_F\}$ converges, i.e.,
$\lim_{k\to \infty}\|F(Z_k)\|_F$ exists.
\end{lemma}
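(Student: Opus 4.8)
The plan is to reduce both assertions to the single one-step estimate $\|F(Z_{k+1})\|_F \le (1+\gamma_k)^{1/2}\|F(Z_k)\|_F$ valid for all $k\ge 0$, and then to exploit the summability $\sum_{k=0}^{\infty}\gamma_k=\gamma<\infty$ from \eqref{etakepk}. First I would verify this estimate by inspecting the two branches of Step 3 of Algorithm \ref{nm1}. If \eqref{FDESCENTV} holds, then $\alpha_k=1$, and since $\tau\in(0,1)$ and $\gamma_k>0$ we get $\|F(Z_{k+1})\|_F\le \tau\|F(Z_k)\|_F\le (1+\gamma_k)^{1/2}\|F(Z_k)\|_F$. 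Otherwise $\alpha_k$ is chosen to satisfy \eqref{DES}; discarding the nonpositive term $-\delta\alpha_k^2|\langle\grad f(Z_k),\Delta Z_k\rangle|$ on its right-hand side yields $\|F(Z_{k+1})\|_F^2\le (1+\gamma_k)\|F(Z_k)\|_F^2$, which is the same estimate after taking square roots. This recovers exactly the displayed bound quoted just before the lemma (with $\epsilon_k=\gamma_k$).

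For the containment $\{Z_k\}\subset\Omega$, I would iterate the one-step estimate to obtain $\|F(Z_k)\|_F\le\big(\prod_{i=0}^{k-1}(1+\gamma_i)\big)^{1/2}\|F(Z_0)\|_F$, and then apply the elementary inequality $1+x\le e^x$ together with $\sum_{i=0}^{k-1}\gamma_i\le\gamma$ to majorize the product factor by $e^{\gamma/2}$. Hence $\|F(Z_k)\|_F\le e^{\gamma/2}\|F(Z_0)\|_F$ for every $k$, which is precisely the defining condition of $\Omega$ in \eqref{LEVELSET}.

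The more delicate part is the existence of $\lim_{k\to\infty}\|F(Z_k)\|_F$, and the main obstacle is that $\{\|F(Z_k)\|_F\}$ need not be monotone, so boundedness alone does not force convergence. The key device is to rescale: set $b_k:=\|F(Z_k)\|_F\big/\prod_{i=0}^{k-1}(1+\gamma_i)^{1/2}$, with the empty product interpreted as $1$. The one-step estimate gives $b_{k+1}\le b_k$, so $\{b_k\}$ is nonincreasing and bounded below by $0$, hence convergent to some $b_*\ge 0$. Because $\sum_{i}\gamma_i<\infty$, the partial products $\prod_{i=0}^{k-1}(1+\gamma_i)$ are increasing and stay below $e^{\gamma}$, so they converge to a finite limit $P^2\in[1,e^{\gamma}]$, and the normalizing factors $\prod_{i=0}^{k-1}(1+\gamma_i)^{1/2}$ converge to $P\in[1,e^{\gamma/2}]$. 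Writing $\|F(Z_k)\|_F=b_k\cdot\prod_{i=0}^{k-1}(1+\gamma_i)^{1/2}$ and passing to the limit, the right-hand side converges to $b_*P$, which shows that $\lim_{k\to\infty}\|F(Z_k)\|_F$ exists and completes the argument.
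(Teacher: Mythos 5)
Your proposal is correct and follows essentially the same route as the paper, which omits the proof entirely by deferring to \cite[Lemma 3.1]{LF99}: the one-step bound $\|F(Z_{k+1})\|_F\le(1+\gamma_k)^{1/2}\|F(Z_k)\|_F$ from the two branches of Step 3, the majorization $\prod_{i}(1+\gamma_i)\le e^{\gamma}$ for containment in $\Omega$, and the standard convergence argument for sequences satisfying $a_{k+1}\le(1+\gamma_k)a_k$ with $\sum_k\gamma_k<\infty$. Your rescaling via $b_k=\|F(Z_k)\|_F\big/\prod_{i=0}^{k-1}(1+\gamma_i)^{1/2}$ is a clean, self-contained way to carry out that last step (and correctly handles the case $b_*=0$), so it supplies exactly the details the paper leaves to the reference.
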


The following lemma shows that the series $\sum_{k=0}^{\infty}  \alpha_k^2 |\langle  \grad f(Z_k), \Delta Z_k \rangle|$ is convergent under some mild condition.
\begin{lemma}\label{lemma:akzkconverges}
If the inequality {\rm (\ref{FDESCENTV})} is satisfied for only a finite number of outer iterations, then we have
\[
\sum_{k=0}^{\infty}  \alpha_k^2 |\langle  \grad f(Z_k), \Delta Z_k \rangle|  < \infty.
\]
\end{lemma}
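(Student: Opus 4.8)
The plan is to exploit the hypothesis directly: if \eqref{FDESCENTV} holds for only finitely many outer iterations, then there is an index $K$ beyond which every step is produced by the backtracking branch of Step 3, so that the Armijo-type inequality \eqref{DES} is in force for all $k\ge K$. The strategy is then a telescoping argument on the squared residuals $\|F(Z_k)\|_F^2$, with the summable perturbation $\gamma_k$ absorbed using the boundedness supplied by Lemma \ref{pro:FDESCENT}.

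First I would rewrite \eqref{DES} for $k\ge K$, recalling $Z_{k+1}=R_{Z_k}(\alpha_k\Delta Z_k)$, as
\[
\delta\,\alpha_k^2\,|\langle \grad f(Z_k),\Delta Z_k\rangle|
\le \|F(Z_k)\|_F^2 - \|F(Z_{k+1})\|_F^2 + \gamma_k\|F(Z_k)\|_F^2 .
\]
Summing this over $k=K,\ldots,N$ makes the first two residual terms telescope, leaving $\|F(Z_K)\|_F^2-\|F(Z_{N+1})\|_F^2\le \|F(Z_K)\|_F^2$ since the residual is nonnegative. For the remaining sum $\sum_{k=K}^{N}\gamma_k\|F(Z_k)\|_F^2$ I would invoke Lemma \ref{pro:FDESCENT}, which guarantees $\{Z_k\}\subset\Omega$ and hence $\|F(Z_k)\|_F^2\le e^{\gamma}\|F(Z_0)\|_F^2=:M$ by \eqref{LEVELSET}; combined with $\sum_{k=0}^{\infty}\gamma_k=\gamma<\infty$ from \eqref{etakepk}, this bounds the remaining sum by $M\gamma$. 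Thus
\[
\delta\sum_{k=K}^{N}\alpha_k^2\,|\langle \grad f(Z_k),\Delta Z_k\rangle|
\le \|F(Z_K)\|_F^2 + M\gamma ,
\]
uniformly in $N$; letting $N\to\infty$ yields $\sum_{k\ge K}\alpha_k^2\,|\langle\grad f(Z_k),\Delta Z_k\rangle|<\infty$.

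Finally I would account for the finitely many indices $k<K$ at which \eqref{FDESCENTV} holds and $\alpha_k=1$: each such term $\alpha_k^2\,|\langle\grad f(Z_k),\Delta Z_k\rangle|$ is a finite inner product of tangent vectors, and there are only finitely many of them, so they contribute a finite quantity; appending them to the tail sum gives $\sum_{k=0}^{\infty}\alpha_k^2\,|\langle\grad f(Z_k),\Delta Z_k\rangle|<\infty$, as claimed. I do not anticipate a genuine obstacle here, since the argument is the standard telescoping/summability estimate; the one point requiring care is that \eqref{DES} is guaranteed only once \eqref{FDESCENTV} has permanently failed, and the hypothesis of the lemma is precisely what licenses discarding the finitely many $\alpha_k=1$ steps, for which \eqref{DES} need not hold.
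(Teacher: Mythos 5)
Your proof is correct and follows essentially the same route as the paper's: rearrange \eqref{DES} into $\delta\alpha_k^2|\langle \grad f(Z_k),\Delta Z_k\rangle| \le \|F(Z_k)\|_F^2 - \|F(Z_{k+1})\|_F^2 + \gamma_k\|F(Z_k)\|_F^2$ for all large $k$, then sum, using the telescoping of the residual terms together with $\sum_k\gamma_k<\infty$ and the boundedness of $\{\|F(Z_k)\|_F\}$ from Lemma \ref{pro:FDESCENT}. The only difference is that you spell out the details (the uniform bound $M$ via the level set $\Omega$, and the finitely many discarded initial terms) that the paper leaves implicit.
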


\begin{proof}
Suppose the inequality {\rm (\ref{FDESCENTV})} is satisfied for  only a finite number of outer iterations. Then $\alpha_k$ is determined by (\ref{DES}) for all $k$ sufficiently large.
From (\ref{DES}) and (\ref{def:NEWPP}) we have for all $k$ sufficiently large,
\[
\delta \alpha_k^2 | \langle  \grad f(Z_k), \Delta Z_k \rangle |
\leq \|F(Z_k)\|_F^2 - \|F(Z_{k+1})\|_F^2 + \gamma_k\|F(Z_k)\|_F^2.
\]
Since $\sum_{k=0}^{\infty} \gamma_k  < \infty$ and $\{\|F(Z_k)\|_F\}$ is bounded,
the convergence of $\sum_{k=0}^{\infty}  \alpha_k^2 |\langle  \grad f(Z_k), \Delta Z_k \rangle|$  can be obtained by summing the above inequalities.
\end{proof}

By following the similar proof of \cite[Lemmas 2--3]{ZBJ18}, we have the following lemma on the iterate  $\Delta Z_{k}$ generated by Algorithm {\rm \ref{nm1}}.
\begin{lemma}\label{lem:dzk}
Let $Z_{k}$ be the current iterate generated by {\rm  Algorithm\ref{nm1}}. Then we have
\[
\|\Delta Z_k\| \leq (1+\eta_k)\| ({\rm D}F(Z_k))^{\dag}\|\|F(Z_k)\|_F
\]
and
\[
 \|F(Z_k) + {\rm D}F(Z_k)[\Delta Z_k]\|_F
\leq \left(\frac{\sigma_k}{\sigma_k + \lambda_{\min}\big({\rm D}F(Z_k)\circ ({\rm D}F(Z_k))^*\big) } + \eta_k \right)\|F(Z_k)\|_F,
\]
where $\lambda_{\min}(\cdot)$ denotes the smallest eigenvalues of a self-adjoint linear operator and $(\mathrm{D}F(X))^{\dag}$ is the pseudoinverse of $\mathrm{D}F(X)$ {\rm \cite[p.163]{LU69}}.
\end{lemma}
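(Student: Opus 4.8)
The plan is to regard $A:=\mathrm{D}F(Z_k)$ as a linear map between the finite-dimensional inner-product spaces $T_{Z_k}\cz$ and $T_{F(Z_k)}\Rnn\simeq\Rnn$ and to read both estimates off its singular value decomposition. Write $A^*:=(\mathrm{D}F(Z_k))^*$, $\sigma:=\overline{\sigma}_k$ (the $\sigma_k$ of the statement), and $b:=F(Z_k)$, and let $s_1\ge\cdots\ge s_p>0$ be the nonzero singular values of $A$, with left/right singular vectors $u_i,v_i$; then $\|A^\dag\|=1/s_p$ and the nonzero eigenvalues of $AA^*$ are the $s_i^2$. By \eqref{eq:tol1} the CG iterate $\Delta Y_k$ satisfies $r_k:=(AA^*+\sigma\,\mathrm{id})[\Delta Y_k]+b$ with $\|r_k\|_F\le\overline{\eta}_k\|b\|_F\le\eta_k\|b\|_F$, so that $\Delta Y_k=(AA^*+\sigma\,\mathrm{id})^{-1}(r_k-b)$ and, by \eqref{eq:direstep}, $\Delta Z_k=A^*(AA^*+\sigma\,\mathrm{id})^{-1}(r_k-b)$. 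Both bounds then reduce to estimating filtered versions of $A$ applied to $r_k-b$.

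For the first inequality I would use the identity $A^*(AA^*+\sigma\,\mathrm{id})^{-1}=(A^*A+\sigma\,\mathrm{id})^{-1}A^*$, which by the SVD has singular values $s_i/(s_i^2+\sigma)$. Since $s_i/(s_i^2+\sigma)\le 1/s_i\le 1/s_p=\|A^\dag\|$, the operator norm of $A^*(AA^*+\sigma\,\mathrm{id})^{-1}$ is at most $\|A^\dag\|$; applying this to $r_k-b$ gives
\[
\|\Delta Z_k\|\le\|A^\dag\|\,\|r_k-b\|_F\le(1+\overline{\eta}_k)\|A^\dag\|\,\|b\|_F\le(1+\eta_k)\|(\mathrm{D}F(Z_k))^\dag\|\,\|F(Z_k)\|_F,
\]
which is the first assertion.

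For the second inequality the starting point is the identity $b+AA^*\Delta Y_k=r_k-\sigma\Delta Y_k$, immediate from the definition of $r_k$. Decomposing $b=b_1+b_2$ and $r_k=r_k^{(1)}+r_k^{(2)}$ along $\mathrm{range}(A)\oplus\ker(A^*)$ and expanding $\Delta Y_k$ in the singular basis, I would collect $b+AA^*\Delta Y_k$ into a regularization part $b_2+\sum_i\frac{\sigma}{s_i^2+\sigma}c_iu_i$ (where $b_1=\sum_ic_iu_i$) and a residual part $\sum_i\frac{s_i^2}{s_i^2+\sigma}d_iu_i$ (where $r_k^{(1)}=\sum_id_iu_i$). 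The residual part is bounded by $\|r_k^{(1)}\|_F\le\overline{\eta}_k\|b\|_F\le\eta_k\|b\|_F$ using $s_i^2/(s_i^2+\sigma)\le1$. For the regularization part I would split into the two exhaustive cases: if $A$ is surjective then $b_2=0$ and $\lambda_{\min}(AA^*)=s_p^2$, giving the bound $\frac{\sigma}{\sigma+s_p^2}\|b\|_F$; if $A$ is not surjective then $\lambda_{\min}(AA^*)=0$ and, since $\sigma/(s_i^2+\sigma)\le1$, the regularization part has squared norm at most $\|b_2\|_F^2+\|b_1\|_F^2=\|b\|_F^2$, i.e.\ it is at most $\frac{\sigma}{\sigma+0}\|b\|_F$. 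In either case the regularization part is at most $\frac{\sigma}{\sigma+\lambda_{\min}(AA^*)}\|b\|_F$, and adding the two parts yields the claimed estimate.

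The first inequality is routine---it is just the operator-norm estimate for the Tikhonov filter $s/(s^2+\sigma)$. The main obstacle is the uniform treatment of the non-surjective case in the second inequality: the component $b_2=F(Z_k)-P_{\mathrm{range}(A)}F(Z_k)$ lies in $\ker(A^*)$ and is annihilated by $A^*$, so it can never be reduced by the Newton step and must instead be absorbed into the coefficient $\frac{\sigma}{\sigma+\lambda_{\min}(AA^*)}$, which collapses to $1$ precisely when $\lambda_{\min}(AA^*)=0$. Arranging the bookkeeping so that the surjective and non-surjective cases glue into the single stated constant is the delicate point; once the orthogonal splitting along $\mathrm{range}(A)\oplus\ker(A^*)$ is in place, the remaining steps are elementary and parallel \cite[Lemmas 2--3]{ZBJ18}.
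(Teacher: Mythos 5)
Your proof is correct and is essentially the argument the paper intends: the paper proves this lemma only by reference to \cite[Lemmas 2--3]{ZBJ18}, and those are exactly the Tikhonov-filter/SVD estimates for $A^*(AA^*+\sigma\,\mathrm{id})^{-1}$ and $AA^*(AA^*+\sigma\,\mathrm{id})^{-1}$ that you carry out. One minor streamlining: the surjective/non-surjective case split is unnecessary, since the eigenvalues of $\sigma(AA^*+\sigma\,\mathrm{id})^{-1}$ are $\sigma/(\lambda_i+\sigma)$ over \emph{all} eigenvalues $\lambda_i$ of $AA^*$ (including $0$ on $\ker(A^*)$), so its operator norm is $\sigma/\bigl(\sigma+\lambda_{\min}(AA^*)\bigr)$ in both cases at once.
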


The following lemma shows that the sequences $\{\Delta Z_{k}\}$ and  $\{ \langle  \grad f(Z_k), \Delta Z_k\rangle\}$  generated by Algorithm {\rm \ref{nm1}} have some accumulation points under some condition.
\begin{lemma}\label{lemma23}
Suppose  {\rm Algorithm \ref{nm1}} generates
an infinite sequence $\{Z_{k}\}$. Let $Z_*$ be an accumulation point  of $\{Z_k\}$ and $\{Z_k\}_{k\in \mathcal{K}}$ be a subsequence of $\{Z_k\}$ converging to $Z_*$. If $\lim_{k \to \infty,k\in \mathcal{K}} \|F(Z_k)\|_F  >0$, then we have
\[
\lim\limits_{k \to \infty,k\in \mathcal{K}} \Delta Z_k =
-(\mathrm{D}F(Z_*))^* \circ\big (\mathrm{D}F(Z_*) \circ (\mathrm{D}F(Z_*))^*
+\bar{\sigma} \mathrm{id}_{T_{F(Z_*)} \Rnn} \big)^{-1} [F(Z_*)],
\]
and
\begin{eqnarray*} \label{DELTAZKFKI}
&&\lim\limits_{k \to \infty,k\in \mathcal{K}}  \langle  \grad f(Z_k), \Delta Z_k\rangle \\
&=& -\big\langle F(Z_*), \mathrm{D}F(Z_*)\circ (\mathrm{D}F(Z_*))^* \circ \big(\mathrm{D}F(Z_*) \circ (\mathrm{D}F(Z_*))^* +\bar{\sigma} \mathrm{id}_{T_{F(Z_*)} \Rnn} \big)^{-1}[F(Z_*)] \big\rangle \\
&\ge&-\frac{1}{\bar{\sigma}}\|F(Z_*)\|_F^2,
\end{eqnarray*}
where $\bar{\sigma}:=\lim_{k \to \infty,k\in \mathcal{K}} \sigma_k$.
\end{lemma}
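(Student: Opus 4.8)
The plan is to pass to the limit along $\mathcal{K}$ in the defining relations of Step 2 of Algorithm \ref{nm1}, exploiting that the hypothesis $\lim_{k\to\infty,k\in\mathcal{K}}\|F(Z_k)\|_F>0$ keeps the regularization parameter bounded away from zero. First I would record the elementary limits: by smoothness of $F$, $\mathrm{D}F$, and $(\mathrm{D}F)^*$ and the convergence $Z_k\to Z_*$ ($k\in\mathcal{K}$), we have $F(Z_k)\to F(Z_*)$, $\mathrm{D}F(Z_k)\to\mathrm{D}F(Z_*)$, and $(\mathrm{D}F(Z_k))^*\to(\mathrm{D}F(Z_*))^*$. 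Since $\|F(Z_k)\|_F\to\|F(Z_*)\|_F>0$, the definition \eqref{def:skek} gives $\overline{\sigma}_k\to\overline{\sigma}=\min\{\overline{\sigma}_{\max},\|F(Z_*)\|_F\}>0$, and because $\eta_k\to0$ by \eqref{etakepk} we get $\overline{\eta}_k\to0$. The inequality $\overline{\sigma}>0$ is the crucial consequence of the hypothesis: it guarantees that $\Phi_*:=\mathrm{D}F(Z_*)\circ(\mathrm{D}F(Z_*))^*+\overline{\sigma}\,\mathrm{id}_{T_{F(Z_*)}\Rnn}$ is invertible even when $\mathrm{D}F(Z_*)$ fails to be surjective.

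Next I would control the inexactness of the CG solve. Write $\Phi_k:=\mathrm{D}F(Z_k)\circ(\mathrm{D}F(Z_k))^*+\overline{\sigma}_k\,\mathrm{id}_{T_{F(Z_k)}\Rnn}$, which is self-adjoint with smallest eigenvalue at least $\overline{\sigma}_k$, so $\|\Phi_k^{-1}\|\le1/\overline{\sigma}_k$. By \eqref{eq:tol1} the residual $r_k:=\Phi_k[\Delta Y_k]+F(Z_k)$ obeys $\|r_k\|_F\le\overline{\eta}_k\|F(Z_k)\|_F$, hence $\Delta Y_k=-\Phi_k^{-1}[F(Z_k)]+\Phi_k^{-1}[r_k]$. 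Since $\Phi_k\to\Phi_*$ with $\Phi_*$ invertible, continuity of inversion gives $\Phi_k^{-1}\to\Phi_*^{-1}$, while the error term is bounded by $\overline{\eta}_k\|F(Z_k)\|_F/\overline{\sigma}_k\to0$. Thus $\Delta Y_k\to-\Phi_*^{-1}[F(Z_*)]$, and applying $(\mathrm{D}F(Z_k))^*$ in \eqref{eq:direstep} and letting $k\to\infty$ along $\mathcal{K}$ yields the first identity
\[
\lim_{k\to\infty,k\in\mathcal{K}}\Delta Z_k=-(\mathrm{D}F(Z_*))^*\circ\Phi_*^{-1}[F(Z_*)]=:\Delta Z_*.
\]

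For the inner product I would use $\grad f(Z_k)=(\mathrm{D}F(Z_k))^*[F(Z_k)]$ from \eqref{GRAVEC}, so that $\grad f(Z_k)\to\grad f(Z_*)$. Because the Riemannian metric \eqref{rm:z} depends smoothly on the base point (its $\mathds{DP}_n$- and $\cw$-blocks involve the coefficients $1/a_{ij}$ and $1/W_{ij}$, which are positive and convergent at $Z_*\in\cz$), the pairing $(Z,\xi,\eta)\mapsto\langle\xi,\eta\rangle$ is jointly continuous; combined with $\grad f(Z_k)\to\grad f(Z_*)$ and $\Delta Z_k\to\Delta Z_*$ this gives $\langle\grad f(Z_k),\Delta Z_k\rangle\to\langle\grad f(Z_*),\Delta Z_*\rangle$. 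Moving one adjoint across yields $-\big\langle F(Z_*),\,\mathrm{D}F(Z_*)\circ(\mathrm{D}F(Z_*))^*\circ\Phi_*^{-1}[F(Z_*)]\big\rangle_F$, which is the stated limit.

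Finally, for the inequality, set $\Psi:=\mathrm{D}F(Z_*)\circ(\mathrm{D}F(Z_*))^*\succeq 0$, so $\Phi_*=\Psi+\overline{\sigma}\,\mathrm{id}$ and the operator identity $\Psi\circ\Phi_*^{-1}=\mathrm{id}-\overline{\sigma}\,\Phi_*^{-1}$ holds. Since $\Phi_*^{-1}\succ 0$ and $\Psi,\Phi_*$ commute, $\Psi\circ\Phi_*^{-1}$ is self-adjoint with eigenvalues in $[0,1)$, whence $0\le\langle F(Z_*),\Psi\circ\Phi_*^{-1}[F(Z_*)]\rangle\le\|F(Z_*)\|_F^2$. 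As $\overline{\sigma}\le\overline{\sigma}_{\max}<1$ by Step 0 of Algorithm \ref{nm1}, we have $\|F(Z_*)\|_F^2\le\frac{1}{\overline{\sigma}}\|F(Z_*)\|_F^2$, so $\langle\grad f(Z_*),\Delta Z_*\rangle\ge-\frac{1}{\overline{\sigma}}\|F(Z_*)\|_F^2$. The main obstacle is not any single estimate but combining, in the limit, three sources of perturbation---the varying base point (hence varying metric and adjoint), the vanishing-but-nonzero inexactness residual, and the shifting regularization $\overline{\sigma}_k$---so the decisive step is establishing $\overline{\sigma}>0$, since this alone keeps $\Phi_*$ invertible and legitimizes $\Phi_k^{-1}\to\Phi_*^{-1}$.
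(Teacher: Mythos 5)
Your proposal is correct and follows essentially the same route as the paper: pass to the limit along $\mathcal{K}$ in the inexact regularized normal equations, using the hypothesis $\lim_{k\to\infty,k\in\mathcal{K}}\|F(Z_k)\|_F>0$ to keep $\bar{\sigma}>0$ and hence the operator $\mathrm{D}F(Z_*)\circ(\mathrm{D}F(Z_*))^*+\bar{\sigma}\,\mathrm{id}$ invertible, then move the adjoint across the inner product to obtain the limit of $\langle\grad f(Z_k),\Delta Z_k\rangle$. Your treatment is in fact slightly more complete than the paper's in two spots: you justify the final inequality via the spectral bound on $\Psi\circ\Phi_*^{-1}$ (the paper asserts it without argument), and you address the base-point dependence of the Riemannian metric explicitly, whereas the paper sidesteps it by rewriting the pairing as a Frobenius inner product $\langle F(Z_k),\mathrm{D}F(Z_k)[\Delta Z_k]\rangle_F$ before taking limits.
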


\begin{proof}
By the hypothesis, $\lim_{k \to \infty,k\in \mathcal{K}}\|F(Z_k)\|_F>0$. Then there exists a constant $c>0$ such that
\BE\label{BOUNDBELOWC}
\|F(Z_k)\|_F \geq c, \quad \forall k \in \mathcal{K}.
\EE
Since $\lim_{k \to \infty,k\in \mathcal{K}} Z_k = Z_*$ and $F$ is continuously differentiable, we have
\BE\label{FDFCC}
\lim\limits_{k \to \infty,k\in \mathcal{K}} {\rm D}F(Z_k) = {\rm D}F(Z_*), \quad \mbox{and} \quad
\lim\limits_{k \to \infty,k\in \mathcal{K}} ({\rm D}F(Z_k))^* = ({\rm D}F(Z_*))^*.
\EE
Using (\ref{def:skek}) and (\ref{BOUNDBELOWC}) we have
\BE\label{sigmaklbd}
\bar{\sigma}=\lim_{k \to \infty,k\in \mathcal{K}} \sigma_k \geq \min\{ \sigma_{\max}, c\}>0.
\EE
Let
\BE\label{def:VK}
W(Z_k) := \big( \mathrm{D}F(Z_k) \circ (\mathrm{D}F(Z_k))^* +\sigma_k \mathrm{id}_{T_{F(Z_k)} \Rnn}\big)
[ \Delta Z_k ]  + F(Z_k).
\EE
From  (\ref{etakepk}), (\ref{eq:tol1}), (\ref{def:skek}), (\ref{def:VK}), and Lemma \ref{pro:FDESCENT} we have
\BE\label{VKLIMIT}
\lim_{k\to \infty} W(Z_k) =  {\bf 0}_{n\times n}.
\EE
Using (\ref{eq:le}), (\ref{eq:tol1}), and (\ref{def:VK}) we have
\BE\label{DELTAZK}
\Delta Z_k =  \big(\mathrm{D}F(Z_k) \circ (\mathrm{D}F(Z_k))^* +\sigma_k \mathrm{id}_{T_{F(Z_k)} \Rnn}\big)^{-1}
[W(X_k) - F(Z_k)].
\EE
It follows from (\ref{GRAVEC}), (\ref{eq:direstep}), (\ref{FDFCC}), (\ref{sigmaklbd}),
(\ref{VKLIMIT}), and (\ref{DELTAZK}) that
\begin{eqnarray*}
& &\lim\limits_{k \to \infty,k\in \mathcal{K}} \Delta Z_k \\
&=& \lim\limits_{k \to \infty,k\in \mathcal{K}} \big(\mathrm{D}F(Z_k))^*\circ (\mathrm{D}F(Z_k) \circ (\mathrm{D}F(Z_k))^* +\sigma_k \mathrm{id}_{T_{F(Z_k)} \Rnn}\big)^{-1}[W(Z_k) - F(Z_k)]\\
&=&(\mathrm{D}F(Z_*))^* \circ \big(\mathrm{D}F(Z_*) \circ (\mathrm{D}F(Z_*))^*
+\bar{\sigma} \mathrm{id}_{T_{F(Z_*)} \Rnn}\big)^{-1} [F(Z_*)].
\end{eqnarray*}
and
\begin{eqnarray*}
& &\lim\limits_{k \to \infty,k\in \mathcal{K}} \langle \grad f(Z_k), \Delta Z_k  \rangle
= \lim\limits_{k \to \infty,k\in \mathcal{K}} \langle (\mathrm{D}F(Z_k))^*[F(Z_k)], \Delta Z_k  \rangle\\
&=& \lim\limits_{k \to \infty,k\in \mathcal{K}} \langle  F(Z_k), \mathrm{D}F(Z_k)[\Delta Z_k]  \rangle
=\langle  \lim\limits_{k \to \infty,k\in \mathcal{K}} F(Z_k),
\lim\limits_{k \to \infty,k\in \mathcal{K}} \mathrm{D}F(Z_k)[\Delta Z_k]  \rangle\\
&=&-\big\langle F(Z_*), \mathrm{D}F(Z_*)\circ (\mathrm{D}F(Z_*))^* \circ \big(\mathrm{D}F(Z_*) \circ (\mathrm{D}F(Z_*))^* +\bar{\sigma} \mathrm{id}_{T_{F(Z_*)} \Rnn} \big)^{-1}[F(Z_*)] \big\rangle_F\\
&\geq&-\frac{1}{\bar{\sigma}}\|F(Z_*)\|_F^2.
\end{eqnarray*}
The proof is complete.

\end{proof}

We now give the proof of Theorem \ref{thm:gc1}.

\vspace{3mm}
{\noindent \bf \em Proof of Theorem \ref{thm:gc1}}
If the equality (\ref{FDESCENTV}) holds for  an infinitely number of outer iterations, then we have
$\lim_{k\to \infty}\|F(Z_k)\|_F=0$.
In this case, every accumulation point of $\{Z_k\}$ is a stationary point of $f$.
Thus we only need to consider the case where (\ref{FDESCENTV}) is satisfied for only a finite number of outer iterations.
In this case, the stepsize $\alpha_k$ is determined by (\ref{DES}) for all $k$ sufficiently large.

Let $Z_{*}$ be an accumulation point of the sequence $\{Z_{k}\}$. Then there exists a subsequence $\{Z_k\}_{k\in \mathcal{K}}$ such that
$\lim_{k \to \infty,k\in \mathcal{K}} Z_{k}=Z_{*}$.
By Lemma \ref{lemma:akzkconverges} we have
\BE\label{af}
\lim \limits_{k \to \infty}  \alpha_k^2 \langle  \grad f(Z_k), \Delta Z_k\rangle  =0.
\EE
If $\liminf_{k\to\infty}\alpha_{k}>0$, then it follows from (\ref{af}) that
\BE\label{CASE1}
\lim\limits_{k \to \infty} \langle  \grad f(Z_k), \Delta Z_k\rangle =0.
\EE
We claim that $\lim_{k \to \infty,k\in \mathcal{K}}\|F(Z_k)\|_F=0$.
By contrary, if $\lim_{k \to \infty,k\in \mathcal{K}}\|F(Z_k)\|_F>0$,
then it follows from Lemma \ref{lemma23} that
$\lim_{k \to \infty,k\in \mathcal{K}} \langle  \grad f(Z_k), \Delta Z_k\rangle <0$,
which is a contradiction to (\ref{CASE1}).
Therefore,  $\lim_{k \to \infty,k\in \mathcal{K}}\|F(Z_k)\|_F=0$.

On the other hand, if $\liminf_{k\to\infty}\alpha_{k}=0$, then there exists a subsequence $\{\alpha_{k}\}_{k\in\mathcal{K}_1}$ of
the sequence $\{\alpha_k\}_{k\in \mathcal{K}}$ such that $\lim_{k \to \infty,k\in \mathcal{K}_1}\alpha_{k} = 0$.
If $\lim_{k\to \infty}\|F(Z_k)\|_F=0$, then the conclusion holds.
Thus we only need to consider the case that $\lim_{k\to \infty}\|F(Z_k)\|_F>0$.
By using Lemma \ref{lemma23} and  (\ref{DES}) we have for $k\in \mathcal{K}_1$ sufficiently large,
\[
\begin{array}{rcl}
\displaystyle \Big\|F\Big(R_{Z_{k}}\Big(\frac{\alpha_k}{\rho}\Delta Z_{k}\Big)\Big)\Big\|_F^2 - \|F(Z_{k})\|_F^2
&\geq& \displaystyle
-\delta\frac{\alpha_{k}^2}{\rho^2}|\langle  \grad f(Z_k), \Delta Z_k\rangle|  + \gamma_{k}\|F(Z_{k})\|_F^2\\
&\geq& \displaystyle \delta\frac{\alpha_{k}^2}{\rho^2}\langle  \grad f(Z_k), \Delta Z_k\rangle
\geq -\frac{2\delta}{\bar{\sigma}}\frac{\alpha_{k}^2}{\rho^2}\|F(Z_*)\|_F^2.
\end{array}
\]
This, together with (\ref{mf:z}) and (\ref{def:pbr}), yields
\[
\begin{array}{rcl}
\displaystyle \widehat{f}_{Z_k}\Big(\frac{\alpha_k}{\rho}\Delta Z_k\Big) - \widehat{f}_{Z_k}(0_{Z_k})
&=&\displaystyle f\Big(R_{Z_k}\Big(\frac{\alpha_k}{\rho}\Delta Z_k\Big)\Big) - f(Z_k) \\[2mm]
&=& \displaystyle \frac{1}{2} \Big\|F\Big(R_{Z_k}\big(\frac{\alpha_k}{\rho}\Delta Z_k\Big)\Big)\Big\|_F^2 - \frac{1}{2}\|F(Z_k)\|_F^2
\geq  \displaystyle -\frac{\delta}{\bar{\sigma}}\frac{\alpha_{k}^2}{\rho^2}\|F(Z_*)\|_F^2.
\end{array}
\]
Thus,
\[
\begin{array}{rcl}
\displaystyle \frac{\widehat{f}_{Z_k}\Big(\frac{\alpha_k}{\rho}\Delta Z_k\Big) - \widehat{f}_{Z_k}(0_{Z_k})}{\frac{\alpha_k}{\rho}}
&\geq& \displaystyle  -\frac{\delta}{\bar{\sigma}}\frac{\alpha_{k}}{\rho}\|F(Z_*)\|_F^2.
\end{array}
\]
By using the mean-value theorem, there exists a positive constant $\theta_k \in (0,1)$ such that
\BE\label{DES-CC}
\begin{array}{rcl}
\displaystyle \Big\langle  \grad \widehat{f}_{Z_k}\Big(\theta_k \frac{\alpha_k}{\rho}\Delta Z_k\Big),
\Delta Z_k \Big\rangle
&\geq& \displaystyle -\frac{\delta}{\bar{\sigma}}\frac{\alpha_{k}}{\rho}\|F(Z_{*})\|_F^2.
\end{array}
\EE
By using Lemma \ref{lemma23}, we know that the sequence $\{\Delta Z_k\}_{k\in \mathcal{K}}$ converges.
Let $\Delta Z_*:=\lim_{k \to \infty,k\in \mathcal{K}} \Delta Z_k$.
Using (\ref{GRADWIDEf}) and (\ref{DES-CC}) we find
\BE\label{GRADDELTAX1}
\begin{array}{rcl}
\langle \grad f(Z_*), \Delta Z_* \rangle =
\langle \grad \widehat{f}_{Z_*}(0_{Z_*}), \Delta Z_* \rangle \geq 0.
\end{array}
\EE
Using Lemma \ref{lemma23} we have
\[
\begin{array}{rcl}
& &\langle \grad f(Z_*), \Delta Z_* \rangle
=  -\langle  ({\rm D}F(Z_*))^*[F(Z_*)],   \Delta Z_*  \rangle \\[2mm]
&=& -\big\langle F(Z_*), \mathrm{D}F(Z_*)\circ (\mathrm{D}F(Z_*))^* \circ \big(\mathrm{D}F(Z_*) \circ (\mathrm{D}F(Z_*))^* +\bar{\sigma} \mathrm{id}_{T_{F(Z_*)} \Rnn} \big)^{-1}[F(Z_*)] \big\rangle \leq 0.
\end{array}
\]
This, together with (\ref{GRADDELTAX1}), implies that
\[
\big\langle F(Z_*), \mathrm{D}F(Z_*)\circ (\mathrm{D}F(Z_*))^* \circ \big(\mathrm{D}F(Z_*) \circ
(\mathrm{D}F(Z_*))^* +\bar{\sigma} \mathrm{id}_{T_{F(Z_*)} \Rnn}\big)^{-1} [F(Z_*)] \big\rangle = 0.
\]
Since $F(Z_*)\neq 0$, it follows from (\ref{sigmaklbd}) and the above equality that
\BE\label{GFGFT1}
F(Z_*) \; \bot \; {\rm im}({\rm D}F(Z_*)).
\EE
In addition, we have
\BE\label{GFGFT2}
{\rm ker}(({\rm D}F(Z_*))^*) \; \bot \; {\rm im}({\rm D}F(Z_*)).
\EE
Based on (\ref{GFGFT1}) and (\ref{GFGFT2}), we have $ F(Z_*)\in {\rm ker}(({\rm D}F(Z_*))^*)$, i.e.,
$({\rm D}F(Z_*))^*[F(Z_*)]=0_{Z_*}$. Then it follows from (\ref{GRAVEC}) that $\grad f(Z_*) = 0_{Z_*}$.
Thus the proof is complete. \hfill $\fbox{}$ \vspace*{5mm}

Next, we give the proof of Theorem \ref{thm:gc2}.

\vspace{3mm}
{\noindent \bf \em Proof of Theorem \ref{thm:gc2}}
By hypothesis, $Z_*$ is an accumulation point of an infinite  sequence $\{Z_{k}\}$ generated by Algorithm {\rm \ref{nm1}}. Then by  Theorem \ref{thm:gc1}, we know  that $Z_*$ is a stationary point of $f$, i.e.,
\[
\grad f(Z_*) = ({\rm D}F(Z_*))^*[F(Z_*)] = 0_{Z_*}.
\]
By assumption $\mathrm{D} F(Z_*):T_{Z_*}\cz \to \Rnn$ is surjective. Then the above equality implies that
\BE\label{FXZERO}
F(Z_*) = {\bf 0}_{n\times n}.
\EE
By using Lemma \ref{pro:FDESCENT} and (\ref{FXZERO}) we have
\BE\label{FXZERONM}
\lim_{k\to\infty} F(Z_k) = {\bf 0}_{n\times n}.
\EE

Since $F$ is continuously differentiable and $\mathrm{D} F(Z_*)$ is surjective,
there exists a positive constant $\delta_0>0$ such that
for all $X\in B(Z_*,\delta_0)$,
\BE\label{DFGENINVERSEEST1}
\lambda_{\min}\big({\rm D}F(X)\circ ({\rm D}F(X))^*\big)
\geq \frac{1}{2} \lambda_{\min}\big({\rm D}F(Z_*)\circ ({\rm D}F(Z_*))^*\big) >0
\EE
and
\BE\label{DFGENINVERSEEST2}
\|(\mathrm{D} F(X))^{\dag}\| \leq  2\|(\mathrm{D} F(Z_*))^{\dag}\|.
\EE
Based on Lemma \ref{lem:dzk}, (\ref{eq:tol1}), (\ref{def:skek}), and (\ref{DFGENINVERSEEST2}) we obtain
\BE\label{THEOREMC21}
\|\Delta Z_k\| \leq (1+\eta_k)\| ({\rm D}F(Z_k))^{\dag}\|\|F(Z_k)\|_F
\leq  2\|(\mathrm{D} F(Z_*))^{\dag}\|\cdot \|F(Z_k)\|_F
\EE
for all $Z_k\in B(Z_*,\delta_0)$.
Since $F$ is continuously differentiable, there exist two positive constants $\delta_1\leq \delta_0$ and $\mu_1< \mu_{\nu}$
such that
\BE\label{TYALOREXP}
\|F(R_X(\Delta X)) - F(X) - {\rm D}F(X)[\Delta X] \|_F \leq \frac{\tau}{4\|(\mathrm{D} F(Z_*))^{\dag}\|} \|\Delta X\|
\EE
for $X\in B(Z_*,\delta_1)$ and $\|\Delta X\| \leq \mu_1$.
By using (\ref{FXZERONM}) and (\ref{THEOREMC21}), there exists a positive constant $\delta_2\leq \delta_1$ such that
\BE\label{DELTAXKBD}
 \|\Delta Z_k \| \leq \mu_1 < \mu_{\nu}, \quad  \forall Z_k \in  B(Z_*,\delta_2).
\EE
Thus it follows from (\ref{THEOREMC21}) and (\ref{TYALOREXP}) that
\BE\label{TYALOREXP22}
\|F(R_{Z_k}(\Delta Z_k)) - F(Z_k) - {\rm D}F(Z_k)[\Delta Z_k] \|_F \leq \frac{\tau}{2} \|F(Z_k)\|_F,
\quad \forall Z_k \in  B(Z_*,\delta_2).
\EE

Let
\BE\label{tauk}
\tau_k := \frac{\|F(Z_k) + {\rm D}F(Z_k)[\Delta Z_k]\|_F}{\|F(Z_k)\|_F}.
\EE
Using Lemma \ref{lem:dzk}, (\ref{def:skek}),  (\ref{DFGENINVERSEEST1}),  and (\ref{tauk}) we have
\begin{eqnarray}\label{T221}
\tau_k
&\leq&\displaystyle  \frac{\sigma_k}{\sigma_k + \lambda_{\min}\big({\rm D}F(Z_k)\circ ({\rm D}F(Z_k))^*\big) } + \eta_k \nonumber \\
&\leq&\displaystyle   \left(1+\frac{2}{\lambda_{\min}\big({\rm D}F(X*)\circ ({\rm D}F(Z_*))^*\big) }\right)\|F(Z_k)\|_F,
\quad \forall Z_k \in  B(Z_*,\delta_0).
\end{eqnarray}
By (\ref{FXZERONM}) and (\ref{T221}), there exists a positive constant $\delta_3\leq \delta_0$ such that
\BE\label{T2212}
\tau_k < \frac{\tau}{2}, \quad  \forall Z_k \in  B(Z_*,\delta_3).
\EE
Let $\hat{\delta} = \min\{\delta_2,\delta_3\}$.
Based on (\ref{TYALOREXP22}),  (\ref{tauk}), and (\ref{T2212}), we have
\begin{eqnarray*}
\|F(R_{Z_k}(\Delta Z_k))\|_F
&=& \|F(R_{Z_k}(\Delta Z_k)) - F(Z_k) - {\rm D}F(Z_k)[\Delta Z_k]
+ F(Z_k) + {\rm D}F(Z_k)[\Delta Z_k]\|_F \\
&\leq& \|F(R_{Z_k}(\Delta Z_k)) - F(Z_k) - {\rm D}F(Z_k)[\Delta Z_k]\|_F
+ \|F(Z_k) + {\rm D}F(Z_k)[\Delta Z_k]\|_F \\
&\leq& \tau \|F(Z_k)\|_F, \quad    \forall Z_k \in  B(Z_*, \hat{\delta}).
\end{eqnarray*}
This, together with {\rm (\ref{FDESCENTV})} and (\ref{def:NEWPP}), yields
\BE\label{DES2221}
\left\{
\begin{array}{rcl}
&&Z_{k+1} = R_{Z_k}(\Delta Z_k), \\[2mm]
&& \|F(Z_{k+1})\|_F \leq \tau \|F(Z_k)\|_F=\left[ 1 - (1-\tau)\right] \|F(Z_k)\|_F,
\quad \forall Z_k \in  B(Z_*, \hat{\delta}).
\end{array}
\right.
\EE

We now show that $\{Z_k\}$ converges to $Z_*$. By contradiction, assume that $\{Z_k\}$ does not converge to $Z_*$.
Then there exist infinitely many $k$ such that $Z_k \not\in  B_{\hat{\delta}}(Z_*)$.
Since $Z_*$ is an accumulation point of $\{Z_k\}$, there exist two index sets
 $\{m_j\}$ and $\{n_j\}$ such that $\lim_{j\to \infty}Z_{m_j} = Z_*$, and for each $j$,
\[
\left\{
\begin{array}{rcl}
Z_{m_j} &\in& B_{\hat{\delta}/2}(Z_*), \quad Z_{m_j+i} \in B_{\hat{\delta}}(Z_*), \quad i=0,\ldots,n_j-1, \\[2mm]
Z_{m_j+n_j} &\not\in& B_{\hat{\delta}}(Z_*), \quad m_j + n_j  < m_{j+1}.
\end{array}
\right.
\]
Then, using (\ref{retr:bd-2}), (\ref{FXZERONM}), (\ref{THEOREMC21}), (\ref{DELTAXKBD}),  and (\ref{DES2221}) we have
\[
\begin{array}{rcl}
\displaystyle \frac{\hat{\delta}}{2} &\leq& \mbox{dist}(Z_{m_j+n_j}, Z_{m_j})
\leq \sum\limits^{m_j+n_j-1}_{k=m_j} \mbox{dist}(Z_{k+1}, Z_{k})  \\[2mm]
&=&  \displaystyle \sum^{m_j+n_j-1}_{k=m_j} \mbox{dist}\big(R_{Z_{k}}(\Delta Z_k), Z_{k} \big)
\leq \sum^{m_j+n_j-1}_{k=m_j} \nu \| \Delta Z_k \| \\[2mm]
&\leq& \displaystyle \sum^{m_j+n_j-1}_{k=m_j} 2\nu \|(\mathrm{D} F(Z_*))^{\dag}\|\cdot \|F(Z_k)\|_F
= \displaystyle \sum^{m_j+n_j-1}_{k=m_j} \frac{2\nu \|(\mathrm{D} F(Z_*))^{\dag}\|}{1-\tau} (1-\tau) \|F(Z_k)\|_F \\[2mm]
&\leq& \displaystyle \sum^{m_j+n_j-1}_{k=m_j} \frac{2\nu \|(\mathrm{D} F(Z_*))^{\dag}\|}{1-\tau}
(\|F(Z_k)\|_F - \|F(Z_{k+1})\|_F)\\[2mm]
&=&  \displaystyle \frac{2\nu \|(\mathrm{D} F(Z_*))^{\dag}\|}{1-\tau} \big( \|F(Z_{m_j})\|_F - \|F(Z_{m_j+n_j})\|_F\big) \\[2mm]
&\to& 0,\quad \mbox{as }\; j\to\infty.
\end{array}
\]
This is a contradiction. Thus the sequence $\{Z_k\}$ converges to $Z_*$. This completes the proof. \hfill $\fbox{}$ \vspace*{5mm}

Finally, we give the proof of Theorem \ref{thm:qc}.

\vspace{3mm}
{\noindent \bf \em Proof of Theorem \ref{thm:qc}}
This follows directly from the proof of  \cite[Theorem 3]{ZBJ18}. \hfill $\fbox{}$ \vspace*{5mm}

\section{Invariant subspace computations} \label{sec:4}
In this section, we further compute  invariant subspaces of an $n$-by-$n$ positive doubly stochastic matrix $C_*$ when its real Schur form is available. By  Algorithm \ref{nm} or Algorithm \ref{nm1} we can obtain a solution to the PDStIEP \eqref{DStIEPEQUATION}. That is, from the prescribed eigenvalues $\lambda^*_1, \lambda^*_2, \ldots , \lambda^*_n$, we can find an
$n$-by-$n$ positive doubly stochastic matrix $C$ with a  real Schur form
\BE\label{dc:rs}
Q_*^TC_*Q_*=\Lambda +  \ca(W_*) + W_* + V_*\equiv T.
\EE
Denote
\BE\label{mat:bd}
T=\begin{blockarray}{ccccc}
n_1 & n_2 &\cdots & n_q &\\
\begin{block}{[cccc]c}
T_{11} & T_{12} & \cdots & T_{1q} & n_1\\
0 &  T_{12} & \cdots & T_{2q} & n_2\\
\vdots &  \vdots & \ddots & \vdots & \vdots \\
0 &  0 & \cdots & T_{qq} & n_q\\
\end{block}
\end{blockarray}\equiv (T_{ij}),
\EE
where $\la(T_{ii})\cap\la(T_{jj})=\emptyset$ whenever $i\neq j$. Here, $\la(\cdot)$ denotes the spectrum of a square matrix. By using \cite[Theorem 7.1.6]{GV13}, we can find a nonsingular matrix $Y\in\Rnn$ such that
\BE\label{bd:t}
Y^{-1}TY=\diag(T_{11},\ldots,T_{qq}),
\EE
where $\diag(T_{11},\ldots,T_{qq})$ is a block diagonal matrix with diagonal blocks $T_{11},\ldots,T_{qq}$.

Let $I_n=[E_1, E_2, \ldots, E_q]$ with $E_i\in\R^{n\times n_i}$ for $i=1,\ldots,q$.
As noted in \cite[section 7.6.3]{GV13}, one may determine the $Y=\prod_{1\le i<j\le q} Y_{ij}$, where
\[
Y_{ij}=I_n+E_iZ_{ij}E_j^T,\quad Z_{ij}\in\R^{n_i\times n_j}.
\]
Let $\overline{T}=T$. Then we update by $\overline{T}=Y_{ij}^{-1}\overline{T}Y_{ij}\equiv (\overline{T}_{ij})$, where
\begin{eqnarray*}
\overline{T}_{ij} &=&\overline{T}_{ii}Z_{ij}-Z_{ij}\overline{T}_{jj}+\overline{T}_{ij}= {\bf 0}_{n_i\times n_j},\\
\overline{T}_{ik} &=&\overline{T}_{ik}-Z_{ij}\overline{T}_{jk},\quad k=j+1:q.
\end{eqnarray*}
Here, the block $Z_{ij}$ is determined by the Sylvester equation
\[
\overline{T}_{ii}Z_{ij}-Z_{ij}\overline{T}_{jj}=-\overline{T}_{ij},
\]
which can be solved by the   Bartels-Stewart algorithm (\cite{BS72} and   \cite[Algorithm 7.6.2]{GV13}).
On the invariant subspace computation of $C$, we have the following algorithm, which comes from  \cite[Algorithm 7.6.3]{GV13}.
\begin{algorithm}  \label{isc}
{\rm \bf Invariant Subspace Computations}
\begin{description}
\item [{\rm Step 0.}] Given a real Schur form \eqref{dc:rs} of a  positive doubly stochastic matrix $C_*\in\Rnn$, where $Q_*\in\Rnn$ is an orthogonal matrix and $T\in\Rnn$ is an upper quasi-triangular matrix with the form of \eqref{mat:bd}. Let $\Theta:=Q_*$.
\item [{\rm Step 1.}] \mbox{for $j=2:q$}\\
\mbox{~~~~~~for $i=1:j-1$}\\
\mbox{~~~~~~~~~~~Solve $T_{ii}Z_{ij}-Z_{ij}T_{jj}=-T_{ij}$ for $Z_{ij}$} \\
\mbox{~~~~~~~~~~~for $k=j+1:q$}\\
\mbox{~~~~~~~~~~~~~~~$T_{ik} =T_{ik}-Z_{ij}T_{jk}$}\\
\mbox{~~~~~~~~~~~end}\\
\mbox{~~~~~~~~~~~for $k=1:q$}\\
\mbox{~~~~~~~~~~~~~~~$\Theta_{kj}=\Theta_{ki}Z_{ij}+\Theta_{kj}$}\\
\mbox{~~~~~~~~~~~end} \\
\mbox{~~~~~~end} \\
\mbox{~~end}
\end{description}
\end{algorithm}

From Algorithm \ref{isc},  we observe that $\Theta=QY$, where the nonsingular matrix $Y$  satisfies \eqref{bd:t}. Let $\Theta=[\Theta_1, \Theta_2, \ldots, \Theta_q]$ with $\Theta_i\in\R^{n\times n_i}$ for $i=1,\ldots,q$. Then we also have
\[
C\Theta_i=\Theta_i T_{ii}, \quad i=1,\ldots, q.
\]
This shows that, for any $1\le i\le q$, $\mathcal{R}(\Theta_i)$ forms the invariant subspace of $C$ corresponding to the eigenvalues determined by $T_{ii}$, where $\mathcal{R}(\Theta_i)$ is the subspaces spanned by the column vectors of $\Theta_i$.

\section{Numerical experiments}\label{sec:5}
In this section, we report the numerical tests of Algorithms \ref{nm} and \ref{nm1} for solving the PDStIEP \eqref{DStIEPEQUATION}.  Our numerical tests were carried out by using {\tt MATLAB 2020a} running on a workstation with  an Intel Xeon CPU E5-2687W of 3.10 GHz and 32 GB of RAM.

We consider the following two numerical examples.
\begin{example}\label{ex:1}
We consider the  {\rm PDStIEP} with arbitrary eigenvalues. Let $\widetilde{C}$ be an $n\times n$ positive matrix with random entries uniformly distributed on the interval $(0,1)$.  Let $\widehat{C}=\cp(\widetilde{C})$ be a positive doubly stochastic matrix, which is obtained by the Sinkhorn-Knopp algorithm  {\rm \cite{Sinkborn64}}. We choose the eigenvalues of $\widehat{C}$ as the prescribed spectrum.
\end{example}
\begin{example}\label{ex:2}
We consider the  {\rm PDStIEP} with multiple zero eigenvalue. Let $\widetilde{C}=C_1C_2$, where $C_1\in\R^{n\times p}$ and $C_1\in\R^{p\times n}$ are two positive matrices with random entries uniformly distributed on the interval $(0,1)$.  Let $\widehat{C}=\cp(\widetilde{C})$ be a positive doubly stochastic matrix, which is obtained by the Sinkhorn-Knopp algorithm  {\rm \cite{Sinkborn64}}. We choose the eigenvalues of $\widehat{C}$ as the prescribed spectrum.
\end{example}

In our numerical tests,  for  Algorithms \ref{nm}  and \ref{nm1},  the  starting points $Z_0=(C_0,Q_0,W_0,V_0)\in\cz$ are generated randomly as follows: For Example \ref{ex:1},
\BE\label{sp-1}
\left\{
\begin{array}{c}
\widetilde{C}_0= {\tt rand}\,(n,n),  \;  C_0 = \cp(\widetilde{C}_0) \in\mathds{DP}_n, \; \mbox{$W_0\in\cw$ with $(W_0)_{ij}=|b_j|$ for $(i,j)\in\ci_2$},\\[2mm]
\big[ Q_0, \widetilde{V}_0\big] = \mbox{\tt schur}\,(C_0,{\rm 'real'}) , \;  V_0 = S\odot\widetilde{V}_0\in\cv,
\end{array}
\right.
\EE
while for Example \ref{ex:2},
\BE\label{sp-2}
\left\{
\begin{array}{c}
\widetilde{C}_0= {\tt rand}\,(n,p)*{\tt rand}\,(p,n),  \;  C_0 = \cp(\widetilde{C}_0) \in\mathds{DP}_n, \; \mbox{$W_0\in\cw$ with $(W_0)_{ij}=|b_j|$ for $(i,j)\in\ci_2$},\\[2mm]
\big[ Q_0,\widetilde{V}_0 \big] = \mbox{\tt schur}\,(C_0,{\rm 'real'}) , \;  V_0 = S\odot \widetilde{V}_0\in\cv.
\end{array}
\right.
\EE
The stopping criteria  are set to be
\[
\|F(Z_k)\|_F \le \epsilon\equiv 5.0\times 10^{-8},
\]
and the largest number of iterations in the CG method is set to be $n^2$.
In addition, we set  $\overline{\sigma}_{\max}=10^{-6}$, $\overline{\eta}_{\max}=0.1$, $\theta_{\min}=0.1$, $\theta_{\max}=0.9$, and $t=10^{-4}$ for  Algorithm \ref{nm} and we set $\tau=0.9$, $\rho=0.5$, $\overline{\sigma}_{\max}=10^{-6}$, $\delta=10^{-4}$, $\eta_k=1/(k+2)$,  and
$\gamma_k=1/(k+2)^2$ for  Algorithm \ref{nm1}.

For comparison purposes,  we use the symbols  `{\tt CT.}', {\tt IT.}', `{\tt NF.}', `{\tt NCG.}',   `{\tt Res.}', and  `{\tt grad.}' to denote the total computing time in seconds, the number of outer iterations, the number of function evaluations, the total number of inner CG iterations, the residual $\|F(Z_k)\|_F$, and the residual $\|\grad f(Z_k)\|$ at the final iterates of the corresponding algorithms accordingly.

The numerical results for Examples \ref{ex:1}--\ref{ex:2} are given in Tables \ref{table1}--\ref{table2}. We observe from Tables \ref{table1}--\ref{table2} that both Algorithm \ref{nm}  and Algorithm \ref{nm1} are very efficient for solving the PDStIEP with different problem sizes. As expected, the quadratic convergence is also observed.
\begin{table}[ht]\renewcommand{\arraystretch}{1.0} \addtolength{\tabcolsep}{2pt}
  \caption{Numerical results for Example \ref{ex:1}.}\label{table1}
  \begin{center} {\scriptsize
   \begin{tabular}[c]{|c|r|r|r|r|r|l|l|}
     \hline
Alg. & $n$  & {\tt CT.} & {\tt IT.} & {\tt NF.} &  {\tt NCG.} &  {\tt Res.}   &  {\tt grad.}  \\  \hline
Alg. \ref{nm} & 100  &      0.5048 s  & 6  & 7  & 169 & $2.79\times 10^{-9}$   & $1.63\times 10^{-9}$  \\
& 200  &      2.0663 s  & 6  & 7      & 230 & $9.82\times 10^{-10}$   & $7.88\times 10^{-10}$  \\
& 500  &       22.278 s  & 6  & 7  &  333  & $3.92\times 10^{-10}$   & $1.94\times 10^{-10}$  \\
& 800  &       01 m 33 s  & 6  & 7  &  369  & $2.65\times 10^{-9}$   & $6.81\times 10^{-10}$  \\
& 1000 &   04 m 16 s  & 7  & 8  & 572  & $2.87\times 10^{-12}$  &   $2.38\times 10^{-12}$  \\
& 1500 &   14 m 35 s  & 6  & 7  & 397  & $2.25\times 10^{-8}$  &   $9.06\times 10^{-9}$  \\
& 2000 &   57 m 17 s  & 7  & 8  & 520  & $3.67\times 10^{-9}$  &   $3.18\times 10^{-10}$  \\
\cline{2-8}\hline
Alg. \ref{nm1} & 100  &      0.4143 s  & 7  & 8  & 167 & $2.36\times 10^{-9}$   & $1.23\times 10^{-9}$  \\
& 200  &       2.9338 s  & 7  & 8  &  307  & $3.36\times 10^{-12}$  & $3.77\times 10^{-12}$  \\
& 500  &       21.659 s  & 7  &  8  &  311  & $7.64\times 10^{-10}$  & $2.47\times 10^{-10}$  \\
& 800  &      01 m 25 s  & 7  & 8  &  331  & $1.96\times 10^{-8}$   & $7.19\times 10^{-9}$  \\
& 1000 &     02 m 49 s  & 7  & 8  &  375  & $3.19\times 10^{-9}$  &  $9.69\times 10^{-10}$  \\
& 1500 &   18 m 02 s  & 9  & 10  & 510  & $3.35\times 10^{-9}$  &   $4.26\times 10^{-10}$  \\
& 2000 &   54 m 16 s  & 8  & 9  & 499  & $2.42\times 10^{-8}$  &   $5.54\times 10^{-9}$  \\
 \cline{2-8}\hline
  \end{tabular} }
  \end{center}
\end{table}

\begin{table}[ht]\renewcommand{\arraystretch}{1.0} \addtolength{\tabcolsep}{2pt}
  \caption{Numerical results for Example \ref{ex:2}.}\label{table2}
  \begin{center} {\scriptsize
   \begin{tabular}[c]{|c|c|r|r|r|r|r|l|l|}
     \hline
Alg. & $n$ & $p$ & {\tt CT.} & {\tt IT.} & {\tt NF.} &  {\tt NCG.} &  {\tt Res.}   &  {\tt grad.}  \\  \hline
Alg. \ref{nm} & 100 & 25 &  0.1525 s  & 4  & 5  & 43 & $1.08\times 10^{-11}$   & $6.64\times 10^{-12}$  \\
& 200  & 50 &      0.4772 s  & 4  & 5    & 36 & $4.09\times 10^{-10}$   & $2.78\times 10^{-10}$  \\
& 500  & 125 &     4.5894 s  & 4  & 5  &  59  & $1.05\times 10^{-12}$   & $9.40\times 10^{-13}$  \\
& 800  & 200 &     8.3675 s  & 3  & 4  &  27  & $4.34\times 10^{-9}$   & $3.97\times 10^{-10}$  \\
& 1000 & 250 &     16.743 s  & 3  & 4  & 27  & $9.68\times 10^{-9}$  &   $2.81\times 10^{-9}$  \\
& 1500 & 375 &   01 m 15 s  & 3  & 4  & 28  & $3.72\times 10^{-9}$  &   $1.65\times 10^{-10}$  \\
& 2000 & 500 &   05 m 02 s  & 3  & 4  & 39  & $1.79\times 10^{-9}$  &   $4.18\times 10^{-11}$  \\
\cline{2-9}\hline
Alg. \ref{nm1} & 100 & 25 &    0.1635 s  & 5  & 6  & 59 & $2.29\times 10^{-13}$   & $2.27\times 10^{-13}$  \\
& 200 & 50 &      0.4435 s  & 4  & 5  &  36  & $4.09\times 10^{-10}$  & $2.78\times 10^{-10}$  \\
& 500 & 125  &    4.5754 s  & 4  &  5  &  59  & $1.05\times 10^{-12}$  & $9.40\times 10^{-13}$  \\
& 800  & 200 &   8.3907 s  & 3  & 4  &  27  & $4.24\times 10^{-9}$   & $3.97\times 10^{-10}$  \\
& 1000 & 250 &    16.460 s  & 3  & 4  &  27  & $9.68\times 10^{-9}$  &  $2.81\times 10^{-9}$  \\
& 1500 & 375 &   01 m 16 s  & 3  & 4  & 28  & $3.72\times 10^{-9}$  &   $1.65\times 10^{-10}$  \\
& 2000 & 500 &   05 m 00 s  & 3  & 4  & 39  & $1.79\times 10^{-9}$  &   $4.18\times 10^{-11}$  \\
 \cline{2-9}\hline
  \end{tabular} }
  \end{center}
\end{table}

To further illustrate the effectiveness of  Algorithms \ref{nm}  and \ref{nm1}, we consider an application of the PDStIEP  in digraph \cite{M88,V00}.
\begin{example} \label{ex:3}
Let $G=(\widehat{V},\widehat{E})$ be a digraph, where $\widehat{V}=\{P_1,\ldots,P_n\}$ contains $n$ vertices and $\widehat{E}$ contains the arcs of $G$  {\rm \cite{M88,V00}}. Let $\widetilde{C}\in\Rnn$ be a nonnegative model of $G$, where each nonzero entry $(\widetilde{C})_{ij}$ denotes the directed arc $\overrightarrow{P_iP_j}$ directed from $P_j$ to $P_j$. As noted in  {\rm \cite{HS72,V00}}, a  digraph $G$ is strongly connected if and only if its associated matrix $\widetilde{C}$ is irreducible or there is an irreducible doubly stochastic matrix $\widehat{C}$ with positive main diagonal entries so that if $i\neq j$ then   $(\widehat{C})_{ij}>0$ if and only if there is an arc from $P_j$ to $P_j$.  In this example,  we assume that
\[
\widetilde{C}=
\left[
\begin{array}{cccccc}
1/40 & 7/8 & 1/40 & 1/40  & 1/40 & 1/40 \\
1/40& 1/40& 19/80 & 19/80 & 19/80 & 19/80 \\
1/6 & 1/6 & 1/6 & 1/6 & 1/6 & 1/6 \\
1/40& 1/40& 1/40 & 9/20 & 1/40 & 9/20 \\
1/40& 1/40& 1/40 & 9/20 & 1/40 & 9/20 \\
1/6 & 1/6 & 1/6 & 1/6 & 1/6 & 1/6
\end{array}
\right],
\]
which is a Google matrix. By using the Sinkhorn-Knopp algorithm  {\rm \cite{Sinkborn64}}, we obtain the following positive doubly stochastic matrix
\[
\widehat{C}=
\left[
\begin{array}{cccccc}
    0.0849  &  0.7646  &  0.0578   & 0.0175  &  0.0578   & 0.0175 \\
    0.0553  &  0.0142  &  0.3573   & 0.1080   & 0.3573   & 0.1080 \\
    0.3301   &  0.0849  &  0.2246   & 0.0679  &  0.2246  &  0.0679 \\
    0.0998  &  0.0257   & 0.0679   & 0.3694  &  0.0679  &  0.3694 \\
    0.0998  &  0.0257   & 0.0679  &  0.3694  &  0.0679  &  0.3694 \\
    0.3301   & 0.0849   & 0.2246  &  0.0679  &  0.2246  &  0.0679
\end{array}
\right].
\]
The digraphs corresponding to $\widetilde{C}$ and $\widehat{C}$ are displayed in  {\rm Figure \ref{fig51}}.
Then we use the eigenvalues $\{ 1.0000,  -0.0856 \pm 0.3336\ri, 0.0000, 0.0000, 0.0000 \}$ of $\widehat{C}$ as the prescribed spectrum.
\end{example}
\begin{figure}[ht]
\includegraphics[width=0.5\textwidth]{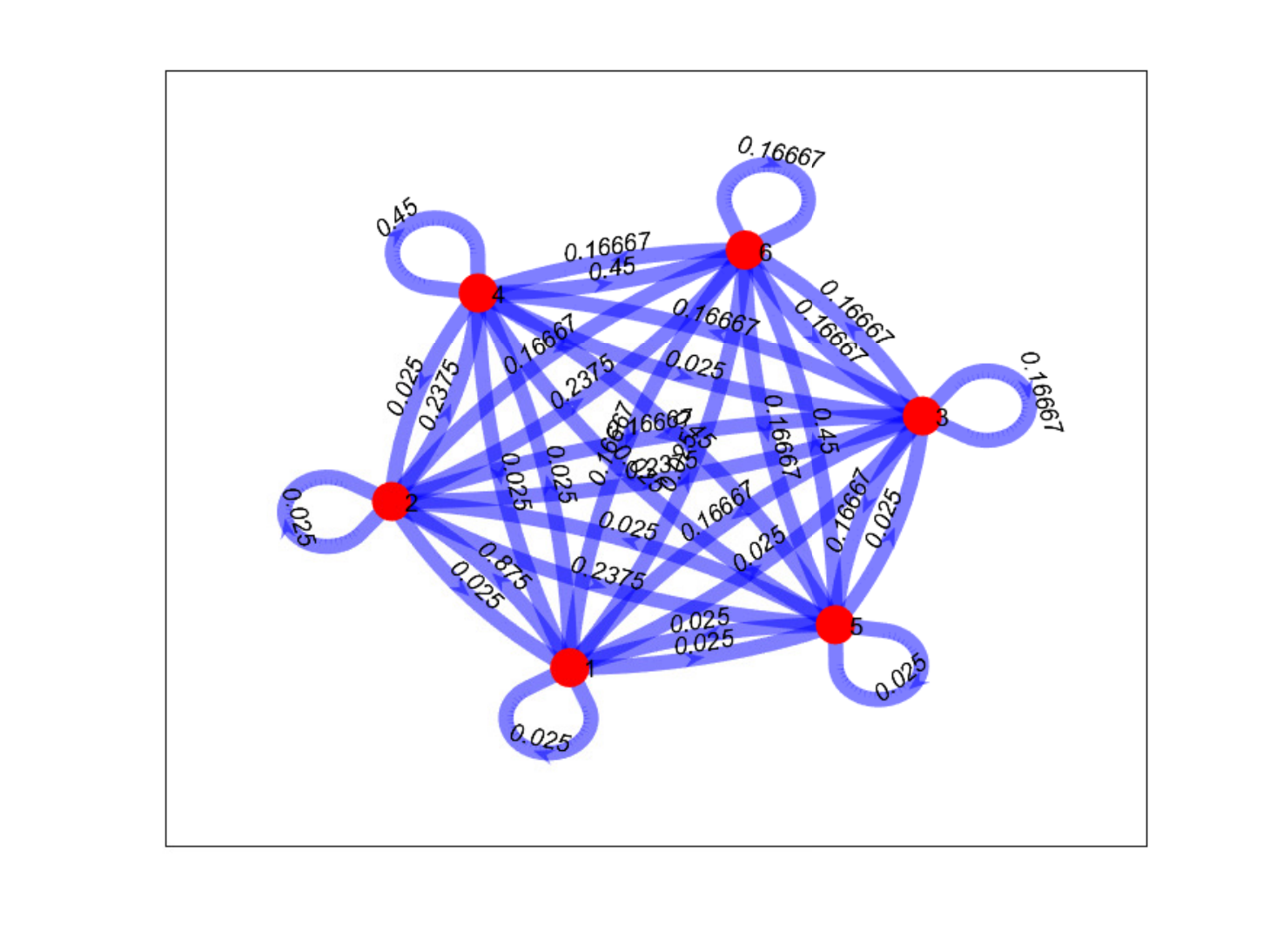}
\includegraphics[width=0.5\textwidth]{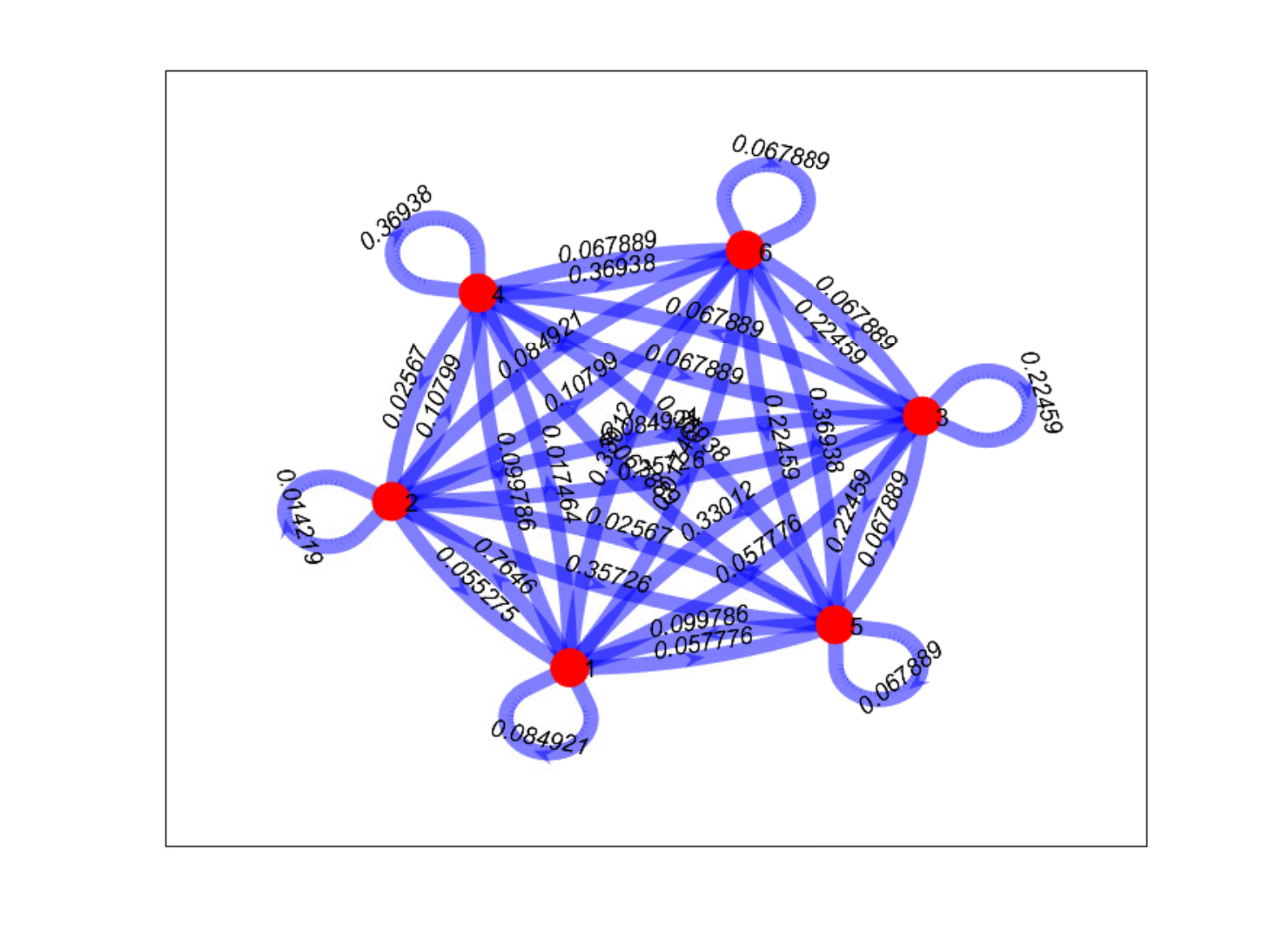}
\caption{The arced digraphs  corresponding to $\widetilde{C}$ (left) and $\widehat{C}$ (right) in Example \ref{ex:3}.} \label{fig51}
\end{figure}

We use  Algorithms \ref{nm}  and \ref{nm1} to Example \ref{ex:3}, where the initial guess $Z_0\in\cz$ is generated as in \eqref{sp-1} and the other parameters are set as above. The numerical results for Example \ref{ex:3} are listed in Table \ref{table3}. We see from Table \ref{table3} that both  Algorithms \ref{nm}  and \ref{nm1} can find a solution to the PDStIEP. The computed positive doubly stochastic matrix
by  Algorithm \ref{nm} is
\BE\label{sol-1}
C_*=
\left[
\begin{array}{cccccc} 0.1279  &  0.1447  &  0.0158  &  0.1858  &  0.1935  &  0.3323 \\
    0.2465  &  0.1898  &  0.1620  &  0.2230  &  0.1086  &  0.0700 \\
    0.2429  &  0.1012  &  0.0772  &  0.3368  &  0.0254  &  0.2166 \\
    0.1750  &  0.1778  &  0.0644  &  0.1330  &  0.2763  &  0.1735 \\
    0.0851  &  0.1670  &  0.4868  &  0.0538  &  0.1504  &  0.0569 \\
    0.1226  &  0.2195  &  0.1938  &  0.0677  &  0.2458  &  0.1506
\end{array}
\right]
\EE
with the real Schur form
\[
T=
\left[
\begin{array}{rrrrrr}
    1.0000 &  -0.0000  &  0.0000  &  0.0000 &  -0.0000 &   0.0000 \\
         0 &  -0.0856  &  0.4259  & -0.0281 &   0.1168 &  -0.0720 \\
         0 &  -0.2613  & -0.0856  &  0.1029 &   0.0358 &   0.0047 \\
         0 &        0  &       0  &  0.0000 &   0.0583 &   0.1008 \\
         0 &        0  &       0  &       0 &  -0.0000 &  -0.1268 \\
         0 &        0  &       0  &       0 &        0 &   0.0000
\end{array}
\right].
\]
The computed positive doubly stochastic matrix by  Algorithm \ref{nm1} is
\BE\label{sol-2}
C_*=
\left[
\begin{array}{cccccc}
    0.1267  &  0.1530  &  0.0161  &  0.1823  &  0.1895  &  0.3324  \\
    0.2502  &  0.1856  &  0.1574  &  0.2267  &  0.1086  &  0.0716  \\
    0.2512  &  0.0979  &  0.0809  &  0.3333  &  0.0256  &  0.2111  \\
    0.1706  &  0.1778  &  0.0665  &  0.1331  &  0.2784  &  0.1736  \\
    0.0832  &  0.1603  &  0.4858  &  0.0567  &  0.1527  &  0.0613  \\
    0.1182  &  0.2255  &  0.1933  &  0.0679  &  0.2451  &  0.1499
\end{array}
\right]
\EE
with the real Schur form
\[
T=
\left[
\begin{array}{rrrrrr}
    1.0000  &  0.0000 &  -0.0000  &  0.0000 &  -0.0000  &  0.0000 \\
         0  & -0.0856 &   0.4178  & -0.0262 &   0.1198  & -0.0830 \\
         0  & -0.2664 &  -0.0856  &  0.0937 &   0.0513  &  0.0012 \\
         0  &       0 &        0  &  0.0000 &   0.0666  &  0.0952 \\
         0  &       0 &        0  &       0 &  -0.0000  & -0.1296 \\
         0  &       0 &        0  &       0 &        0  &  0.0000
\end{array}
\right].
\]
The digraphs corresponding to the computed solutions are displayed in Figure \ref{fig52}.

Moreover, for the solution $C_*$ defined by \eqref{sol-1}, by using Algorithm  \ref{isc}, we can obtain the computed matrix
\[
\Theta=
\left[
\begin{array}{rrrrrr}
    0.4082 &   0.4510 &   0.2960  &  0.7346 &  -0.2122 &  -0.4807 \\
    0.4082 &  -0.0580 &  -0.3817  & -0.2308 &   0.7245 &  -0.5237 \\
    0.4082 &   0.3621 &  -0.6488  &  0.0503 &  -0.0782 &   0.2870 \\
    0.4082 &   0.1870 &   0.4218  & -0.6460 &  -0.2212 &   0.0423 \\
    0.4082 &  -0.7738 &  -0.0876  & -0.2025 &  -0.5637 &  -0.0109 \\
    0.4082 &  -0.1683 &   0.4002  &  0.2944 &   0.3507 &   0.6859
\end{array}
\right]\equiv[\bd\theta_1,\bd\theta_2,\bd\theta_3,\bd\theta_4,\bd\theta_5,\bd\theta_6],
\]
where $\mathcal{R}(\bd\theta_1)$,  $\mathcal{R}([\bd\theta_2,\bd\theta_3])$, and $\mathcal{R}([\bd\theta_4,\bd\theta_5,\bd\theta_6])$, respectively, form the invariant subspaces of $C_*$ corresponding to the eigenvalues of
\[
T_{11}=1.0000,\quad T_{22}=\left[
\begin{array}{rr}
-0.0856  &  0.4259   \\
-0.2613  & -0.0856
\end{array}
\right],\quad T_{33}=\left[
\begin{array}{rrr}
0.0000 &   0.0583 &   0.1008 \\
0 &  -0.0000 &  -0.1268 \\
0 &        0 &   0.0000
\end{array}
\right].
\]

Similarly, for the solution $C_*$ defined by \eqref{sol-2}, by using Algorithm  \ref{isc}, we can obtain the computed matrix
\[
\Theta=
\left[
\begin{array}{rrrrrr}
    0.4082 &   0.4383 &   0.3149 &   0.7214 &  -0.2044 &  -0.4622 \\
    0.4082 &  -0.0298 &  -0.3879 &  -0.2069 &   0.7223 &  -0.5384 \\
    0.4082 &   0.3819 &  -0.6306 &   0.0432 &  -0.0604 &   0.2945 \\
    0.4082 &   0.1726 &   0.4331 &  -0.6589 &  -0.2088 &   0.0382 \\
    0.4082 &  -0.7710 &  -0.1182 &  -0.1822 &  -0.5958 &  -0.0169 \\
    0.4082 &  -0.1920 &   0.3888 &   0.2835 &   0.3471 &   0.6848
\end{array}
\right]\equiv[\bd\theta_1,\bd\theta_2,\bd\theta_3,\bd\theta_4,\bd\theta_5,\bd\theta_6],
\]
where $\mathcal{R}(\bd\theta_1)$,  $\mathcal{R}([\bd\theta_2,\bd\theta_3])$, and $\mathcal{R}([\bd\theta_4,\bd\theta_5,\bd\theta_6])$, respectively, form the invariant subspaces of $C_*$ corresponding to the eigenvalues of
\[
T_{11}=1.0000,\quad T_{22}=\left[
\begin{array}{rr}
-0.0856 &   0.4178   \\
-0.2664 &  -0.0856
\end{array}
\right],\quad T_{33}=\left[
\begin{array}{rrr}
0.0000 &   0.0666  &  0.0952    \\
     0 &  -0.0000  & -0.1296  \\
     0 &        0  &  0.0000
\end{array}
\right].
\]
\begin{table}[ht]\renewcommand{\arraystretch}{1.0} \addtolength{\tabcolsep}{2pt}
  \caption{Numerical results for Example \ref{ex:2}.}\label{table3}
  \begin{center} {\scriptsize
   \begin{tabular}[c]{|c|r|r|r|r|l|l|}
     \hline
Alg. & {\tt CT.} & {\tt IT.} & {\tt NF.} &  {\tt NCG.} &  {\tt Res.}   &  {\tt grad.}  \\  \hline
Alg. \ref{nm} &  0.0150 s  & 6  & 7  & 50 & $1.29\times 10^{-11}$   & $1.38\times 10^{-11}$  \\
\cline{2-7}\hline
Alg. \ref{nm1} &  0.0160 s  & 7  & 8  & 53 & $6.54\times 10^{-13}$   & $5.83\times 10^{-13}$  \\
 \cline{2-7}\hline
  \end{tabular} }
  \end{center}
\end{table}
\begin{figure}[http]
\includegraphics[width=0.5\textwidth]{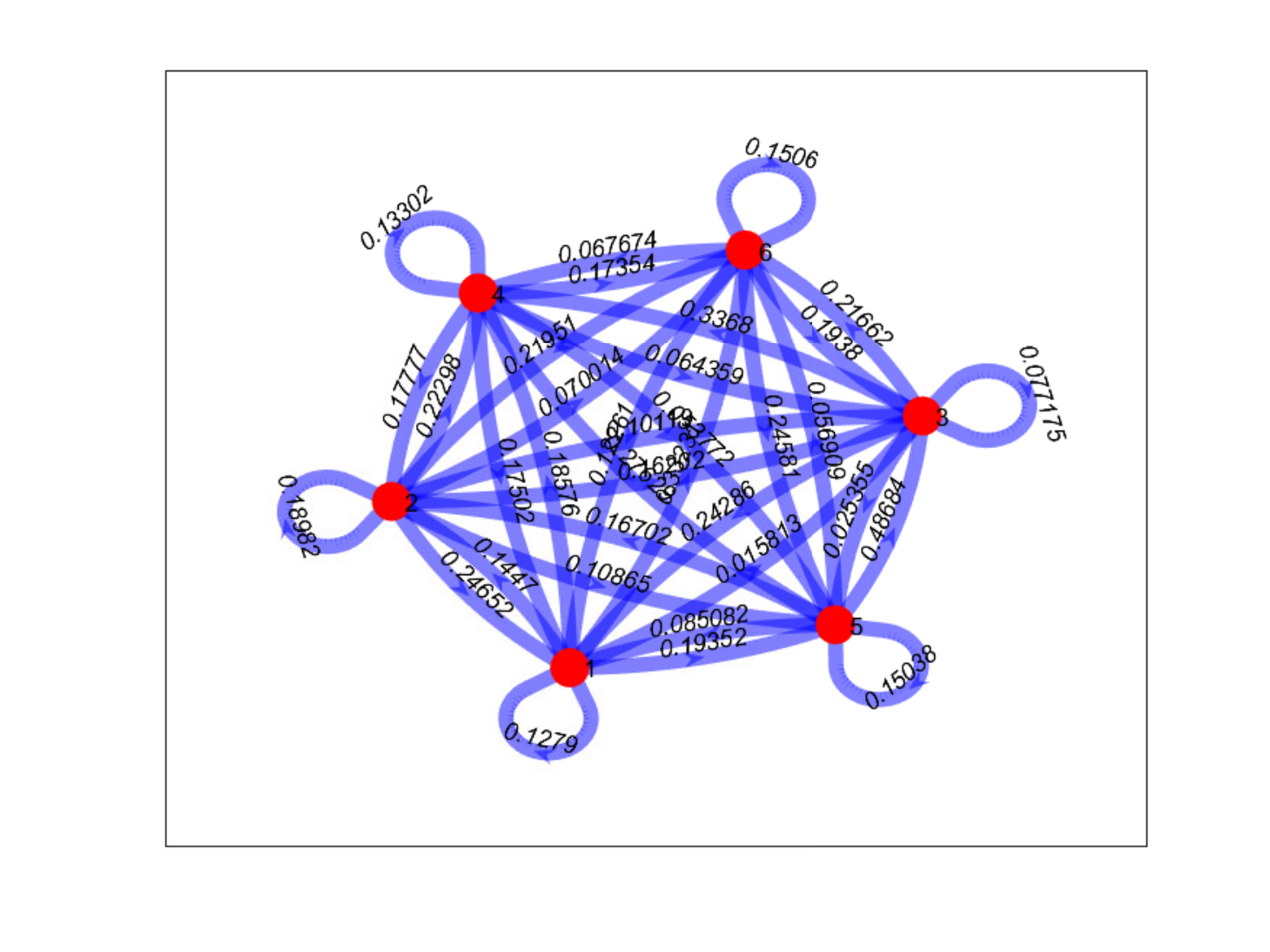}
\includegraphics[width=0.5\textwidth]{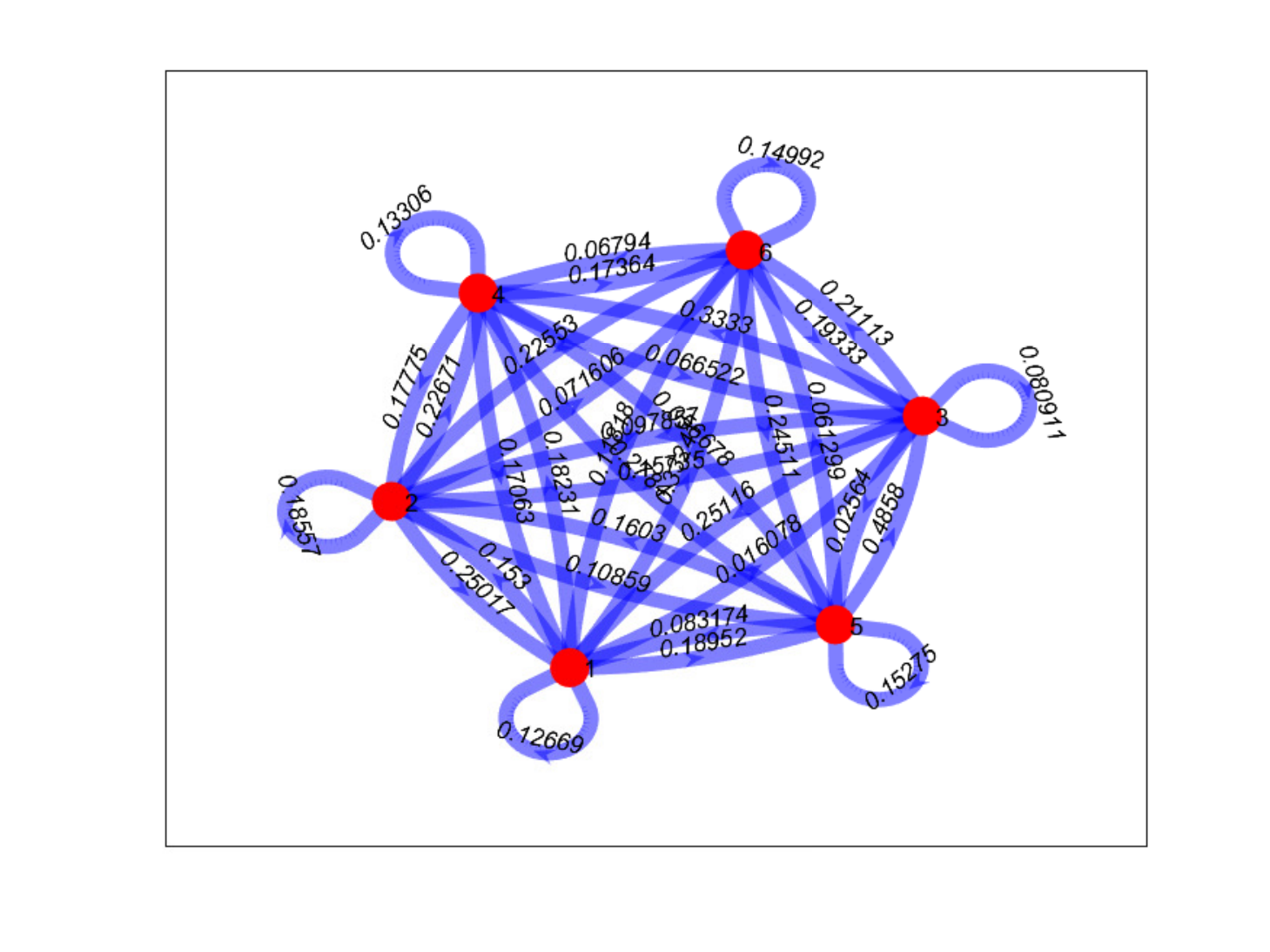}
\caption{The arced digraphs  corresponding to $C_*$ computed by Algorithm \ref{nm}   (left)  and Algorithm \ref{nm1} (right)  for Example \ref{ex:3}.} \label{fig52}
\end{figure}

\section{Concluding remarks}\label{sec:6}
In this paper, we present both monotone and nonmonotone Riemannian inexact Newton-CG methods for solving the inverse eigenvalue problem of constructing  a positive doubly  stochastic matrix from the prescribed realizable eigenvalues. We show that our methods converge globally and quadratically  under some assumptions.
We also provide invariant subspaces of the constructed solution to the inverse problem via its real Schur decomposition. Finally, we present some numerical tests (including an application in digraph) to  demonstrate  the efficiency of our methods. We must point out that the solutions computed by our methods are dependent on the starting points. In addition, an interesting question is how to design a  Riemannian method for finding  a low-rank positive doubly  stochastic matrix from the prescribed several nonzero eigenvalues. These questions need further study.



\end{document}